\documentclass{amsart}
\usepackage{bbm}
\usepackage[dvips]{graphicx}
\usepackage[latin1]{inputenc}

\newtheorem{prop}{Proposition}
\newtheorem{theorem}{Theorem}
\newtheorem{lemma}{Lemma}

\newtheorem{defi}{Definition}
\newtheorem{corollary}{Corollary}

\setcounter{tocdepth}{1}

\def\N{{\mathbb N}}
\def\R{{\mathbb R}}

\def\P{{\mathbb P}}
\def\E{{\mathbb E}}

\def\eps{\varepsilon}

\newcommand{\diff}{\mathop{}\mathopen{}\mathrm{d}}
\newcommand\ind[1]{\mathbbm{1}_{\left\{#1\right\}}}
\newcommand\croc[1]{\left\langle #1\right\rangle}

\def\cal{\mathcal}
\def\eps{\varepsilon}

\setcounter{tocdepth}{1}

\title[Scaling Analysis of a Transient Stochastic Network]{A Scaling Analysis of a Transient Stochastic Network (I)}

\author{Mathieu Feuillet}
\address[M. Feuillet,Ph. Robert]{INRIA Paris---Rocquencourt, Domaine de Voluceau, 78153 Le Chesnay, France}
\email{Mathieu.Feuillet@inria.fr}
\author{Philippe  Robert}
\email{Philippe.Robert@inria.fr}
\urladdr{http://www-rocq.inria.fr/\string~robert}
\date{\today}
\keywords{Time Scales; Stochastic Averaging Principle; Transient Markov Chains with Absorbing State; Skorokhod Problem}

\begin{document}

\begin{abstract}
In this paper, a simple transient Markov process with an absorbing point is used to investigate the qualitative behavior of a large scale storage network of non reliable file servers where files can be duplicated. When the size of the system goes to infinity it is shown that there is a critical value for the maximum number of files per server such that below this quantity, the system stays away from the absorbing state, all files lost, in a quasi-stationary state where most files have a maximum number of copies. Above this value, the network looses a significant number of files until some equilibrium is reached. When the network is stable, it is shown that, with  convenient time scales, the evolution of the network towards the absorbing state can be described via a stochastic averaging principle. 
\end{abstract}

\maketitle

\bigskip

\hrule

\vspace{-3mm}

\tableofcontents

\vspace{-1cm}

\hrule

\bigskip

\section{Introduction}
\subsection*{Storage systems}
One considers a large scale storage system, it is a set of file servers in a communication network.  In order to ensure persistence, files are duplicated on several servers. When the disk of a given server breaks down, its files are lost but  can be retrieved on the other servers if copies are available. For these architectures a fraction of the bandwidth  of a server is devoted to the duplication mechanism of its files to other servers. On one hand, there should be sufficiently many copies so that any file has a copy available on at least one server  at any time. On the other hand, in order to use  the bandwidth in an optimal way, there should not be too many copies of a given file so that the network can accommodate a large number of distinct files. These systems are known as distributed hash tables (DHTs),  they play an important role in the development of some large scale distributed systems. see Rhea et al.~\cite{rhea-05} and Rowstron and Druschel~\cite{rowstron-01} for a more detailed presentation. 

Failures of disks occur naturally randomly, these events are quite rare but, given the large number of nodes of these distributed systems, this is not a negligible phenomenon at the level of the network. If, for a short period of time, several of the servers break down, it may happen that files will be lost for good just because all the available copies were on these servers and because the recovery procedure was not completed before the last copy disappeared. To design such a system, it is therefore desirable to find a convenient duplication policy and to dimension the system so that all files will have at least a copy as long as possible. The natural critical parameters of the network are the failure rates of servers, the bandwidth allocated to duplication, the number of files and the number of servers. The ratio of the two last quantities being a measure of the storage capacity of the system. It is important to understand the impact of each of these parameters on the efficiency of the storage
system. 

\subsection*{Stochastic Models}
This network can be seen as a classical set of queues with breakdowns. Numerous stochastic models of such systems have been investigated in the literature, see Chapter~6 of King~\cite{King} for example and the references therein. Related models concern queues with retrial and queues with servers of walking types, see Artalejo and G{\'o}mez-Corral~\cite{Arta} and Falin and Templeton~\cite{Falin}. For most of the systems analyzed, there are, in general, one or two nodes which are subject to breakdowns.  A queueing analysis is generally done in this context: convergence in distribution of the associated Markov model and analysis of the distribution of the availability of the system, of the delays and of queue sizes, \ldots  For DHTs, the rare stochastic models to investigate their performances describe the evolution of the number of copies of a given file. See Chun et al.~\cite{chun-06}, Picconi et al.~\cite{picconi-07} and Ramabhadran and Pasquale~\cite{ramabhadran-06}. See also Feuillet and Robert~\cite{Feuillet}. In most of these studies  the interaction between different files, due to the bandwidth sharing limitations, has not been really considered, except through simulations. The purpose of this paper is to investigate the impact of this interaction. The second important aspect is that  a large system, i.e. with a large number of files, will be considered instead of a small number of elements. This assumption is quite natural for current distributed systems.   

More precisely, the following simple model is considered:  A file can have at most two copies, the total bandwidth allocated to file duplication is given by $\lambda N$, for $\lambda>0$ and $N\in\N$. If at some moment there are $ x\geq 1$ files with exactly one copy, a new copy of each of these files is created at rate $\lambda N/x$.  It is assumed that initially $F_N$ files are present in the system with two copies and each copy of a file disappears at rate $\mu$. Recall that a file with $0$ copies is lost.  It will be assumed that the total number of files $F_N$ is proportional to $N$, i.e. that $F_N/N$ converges to some $\beta>0$. Clearly enough, this system is transient and the empty state, all files are lost,  is an absorbing state. The aim of this paper is of describing the decay of the network, i.e. how the set of lost files in increasing.  For $\delta>0$, there exists some finite random instant $T_N(\delta)$, such that a fraction $\lfloor \delta N\rfloor$ of the files are lost after time  $T_N(\delta)$. The paper investigates the order of magnitude in $N$ of the variables $T_N(\delta)$ as $N$ gets large and the role of the parameters $\lambda$, $\mu$
and $\beta$ in these asymptotics. 

In practice, if there are $N$ servers and that each of them has an available bandwidth $\lambda$ to duplicate files, the maximal capacity for duplication is then $\lambda N$. The model described above has therefore an optimal use of the duplication mechanism since the maximal duplication capacity is always available. For this reason this model provides upper bounds on the optimal performances of such  a system.  In particular, for any duplication mechanisms, after a duration of time with the same distribution as $T_N(\delta)$, at least $\lfloor \delta N\rfloor$ files will be lost for good.  A more realistic model, when the total duplication bandwidth is not anymore centralized, is investigated in Feuillet and Robert~\cite{Feuillet:06} via  mean-field limit asymptotics. It turns out that the corresponding mean-field limit can in fact be expressed in terms of the simple model analyzed in this paper.  The more general case when there are at most $d\geq 2$ copies of a given file will be investigated in another  sequel to this paper.

\subsection*{Time Scales of Transient Markov Processes}
If, for $i\in\{0,1,2\}$, $X_i^N(t)$ denotes the number of files with $i$ copies in the network, then, under Poisson assumptions for failures and for duplication processes, $(X_0^N(t),X_1^N(t))$ is clearly a finite Markov process with $(F_N,0)$ as an absorbing state. At the difference of previous works mentioned above, there is clearly no question of equilibrium here since the system {\em dies} at $(F_N,0)$. A possible approach to investigate the  decay of such a  system could be of considering the associated quasi-stationary distributions of the Markov process. See Darroch and Seneta~\cite{Darroch} and Ferrari et al.~\cite{Ferrari} for example. It would give a description of the system conditionally on the event that only a fraction  of the files has been lost. These quantities are generally expressed in terms of the spectral characteristics of the jump matrix. For this reason, explicit description of these distributions are quite rare outside one dimensional birth and death processes. In this paper, different time scales will be used to investigate the qualitative behavior of these transient processes. Times scales can be thought as ``lenses'', two of them that will focus on the stable part of the sample path of the  process (if any), this will give at the same time a kind of associated quasi-stationary distribution. Finally, a third time scale will focus on the decaying part of the sample paths, i.e. when the proportion of lost files is steadily increasing. 
\subsection*{Stochastic Averaging Principles} 
It is shown that in some cases,  a stochastic averaging principle (SAP) occurs for this transient process: roughly speaking its dynamics can be decomposed into two components, one evolving on a fast time scale and the other one on a slower time scale. The system is fully coupled in the sense that the jump rates of the slow process  depends on the equilibrium of the fast process, and the jump rates of the fast process depends of the state of the slow process. See Khasminskii~\cite{Khasminski:01} and Freidlin and Wentzell~\cite{Freidlin:02}. This phenomenon is known to occur for the classical example of loss networks. In this case the vector of the number of free places of the congested links is the fast component, see Kelly~\cite{Kelly:04} and Hunt and Kurtz~\cite{Hunt:05}. Outside this class of networks, there are, up to now, few examples of  stochastic networks for which a  fully coupled  SAP occurs. See Feuillet~\cite{Feuillet12} and Perry and Whitt~\cite{Whitt} for recent examples of SAP. 

This SAP phenomenon is already well known in the framework of deterministic dynamical systems, see Guckenheimer and Holmes~\cite{Guckenheimer}. In a stochastic context, an additional difficulty, sometimes underestimated, is of controlling the regularity properties of the family of invariant distributions indexed by the states of the slow process, instead of the family of fixed points in the deterministic case. This can be done through a kind of uniform control of some ergodic averages, see Freidlin and Wentzell~\cite{Freidlin:02} or by using a martingale representation of the associated Markov processes, see Kurtz~\cite{Kurtz:05}. In any case, there are several delicate technical issues to address: a convenient tightness result for a set of random measures and the rate of convergence of ergodic averages. In this paper,  a martingale formulation is also used but with a technical background significantly reduced. By taking a convenient state space for random measures, technical  results related to extensions of  random measures with specific measurability properties are not necessary. Furthermore,  the tightness of the family of invariant distributions of fast processes is obtained as a consequence of a simple monotonicity property. If the monotonicity property is quite specific, it seems that the method to avoid extension results can be used in a quite general framework. This will be the subject of further investigations.   

\subsection*{Outline of the Paper}
Section~\ref{modelsec} introduces the Markov process investigated and its corresponding martingale representation. Section~\ref{FluidSec} studies a fluid picture of the network, i.e. the limit of the sequence of processes $(X_0^N(t)/N,X_1^N(t)/N)$, it is shown in Theorem~\ref{fluidtheo} that its limit, the solution of an ODE, is not trivial when $\lambda <2\mu\beta$ and is $(0,0)$ when $\lambda > 2\mu\beta$. The storage system is therefore properly designed when $\lambda >2\mu\beta$, otherwise it is inefficient since it is losing a significant number of files right from the beginning. Section~\ref{heavysec} is devoted to the critical case $\lambda=2\mu\beta$, Theorem~\ref{Heavy} shows that the sequence of processes $(X_0^N(t)/\sqrt{N},X_1^N(t)/\sqrt{N})$ is converging in distribution and that its limit can be expressed in terms of a non-Markovian one-dimensional process, solution of an unusual stochastic differential equation with reflection at $0$. In Section~\ref{AverageSec}, the stable case $\lambda >2\mu\beta$ is investigated. It is shown that the capacity of the system remains intact at the normal time scale: For $t\geq 0$,   Theorem~\ref{NormalProp} proves that the variable $(X_0^N(t))$ converges in distribution to a  Poisson process. Only a finite number of files is lost as $N$ goes to infinity. More interesting, Theorem~\ref{theodec} shows that on the time scale $t\to Nt$ the transience of the Markov process  shows up: at ``time'' $Nt$ a fraction $\psi(t)N$ of the files is lost where $\psi(t)$ is the solution of some fixed point equation. This is the case where a stochastic averaging principle holds: around time $Nt$ there is a local equilibrium for which  $(\beta-\Psi(t))N$  files are still available. As a consequence,  $t\to Nt$ is the convenient  time scale to observe the degradation of the storage system. The proof of the convergence results use a more or less straightforward extension of the classical Skorokhod problem formulation, see Skorokhod~\cite{Skorokhod}. The necessary material is gathered in the appendix to keep the paper self-contained. 
\section{The Stochastic Model}\label{modelsec}
Recall that $F_N$ is the total number of distinct files initially present in the network and $X_1^N(t)$, resp. $X_0^N(t)$ is the number of files with one copy at time $t$, the number of lost files at this instant. The number $X_2^N(t)$ of files with two copies at time $t$ is defined by  $X_2^N(t)=F_N-X_0(t)-X_1(t)$. In general it will be assumed that all files have the maximum number of copies initially. The copy of a file is lost with rate $\mu$ and, conditionally on $X_1^N(t)=x$, a file with only one copy gets an additional copy with rate $\lambda N/x$. All events are supposed to occur after an exponentially distributed amount of time. Under these assumptions $(X(t))\stackrel{\text{def.}}{=}(X_0^N(t),X_1^N(t))$ is a Markov process on the state space 
\[
{\cal S}=\{x=(x_0,x_1)\in\N^2: x_0+x_1\leq F_N\},
\]
as mentioned above, with these assumptions, the state $(F_N,0)$ is an absorbing point of the process $(X^N(t))$. 

For $x\in\N^2$, the $Q$-matrix $Q^N=(q^N(\cdot,\cdot))$ of the process $(X(t)))$ is defined by 
\begin{equation}\label{Qmat}
\begin{cases}
q^N(x,x+e_1)=2\mu (F_N-x_0-x_1),\\
q^N(x,x-e_1)=\lambda N\ind{x_1>0},\\
q^N(x,x-e_1+e_0)=\mu x_1.
\end{cases}
\end{equation}
It is assumed that 
\begin{equation}\label{beta}
\lim_{N\to+\infty}{F_N}/{N}=\beta,
\end{equation}
and one denotes $\rho=\lambda/\mu$.  

The stochastic differential equations associated to this transient Markov process can be written as 
\begin{align}
X^N_0(t)&=X^N_0(0)+\sum_{i=1}^{+\infty} \int_0^t\ind{i\leq X_1^N(u-)} {\cal N}_{\mu,i}(\diff u),\label{SDE01}\\
X^N_1(t)&= X^N_1(0) -\int_0^t\ind{X_1^N(u-)>0} {\cal N}_{\lambda N}(\diff u) -\sum_{i=1}^{+\infty} \int_0^t\ind{i\leq X_1^N(u-)} {\cal N}_{\mu,i}(\diff u)\label{SDE02}
\\ &\quad +\sum_{i=1}^{+\infty} \int_0^t\ind{i\leq F_N-X_0^N(u-)-X_1^N(u-)} {\cal N}_{2\mu,i}(\diff u),\notag
\end{align}
where $({\cal N}_{\mu,i})$ and  $({\cal N}_{2\mu,i})$ are two i.i.d. independent sequence of Poisson processes with respective parameters $\mu$ and $2\mu$, ${\cal N}_{\lambda N}$ is an independent Poisson process with parameter $\lambda N$.  For the $i$th  file  having only one copy, the integrand of the right hand side of Relation~\eqref{SDE01} corresponds to its definitive loss  and the first term of the right hand side of Relation~\eqref{SDE02} is associated to its duplication. The last term of Relation~\eqref{SDE02} represents the loss of a copy of files with two copies. 

Relation~\eqref{SDE01} can be rewritten as 
\begin{equation}\label{SDE1}
X^N_0(t)=X^N_0(0)+\mu\int_0^t X^N_1(u)\,\diff u+M_0^N(t),
\end{equation}
where $(M_0^N(t))$ is the martingale defined by
\[
M_0^N(t)=\sum_{i=1}^{+\infty} \int_0^t\ind{i\leq X_1^N(u-)} \left[{\cal N}_{\mu,i}(\diff u)-\mu \diff u\right],
\]
its increasing process is given by
\[
\croc{M_0^N(t)}=\mu\int_0^t X_1^N(u)\,\diff{u},
\]
in particular, since $X_1^N(u)\leq F_N$, there exists some constant $C_0$ such that 
\[
\E\left(M_{0}^N(t)^2\right)=E\left(\croc{M_0^N(t)}\right)\leq C_0 N t
\]
holds for all $t\geq 0$ and $N\geq 1$.

Similarly, if  $f$ is in $C_c(\N)$, the set of functions  with finite support on $\N$, Relation~\eqref{SDE02} gives the representation
\begin{multline}
f(X^N_1(t))=f(X^N_1(0))+\mu\int_0^t\left[f(X_1^N(u)-1)-f(X_1^N(u))\right]X_1^N(u)\,\diff u\label{SDE2}\\+
N\int_0^t \Omega\left[\frac{F_N}{N}-\frac{X^N_1(u)+X^N_0(u)}{N}\right](f)(X(u))\,\diff u+M_{1}^N(t),
\end{multline}
where , for $y\geq 0$, $\Omega[y]$ is the functional operator defined by 
\begin{equation}\label{Omega}
\Omega[y](f)(x)= 2\mu y(f(x+1)-f(x))+ \lambda \ind{x>0}(f(x-1)-f(x)),\quad x\in\N,
\end{equation}
and $(M_{1}^N(t))$ is a martingale such that, for some constant $C_1$,
\[
\E\left(M_{1}^N(t)^2\right)\leq C_1N \|f\|_{\infty,F_N} t,
\]
holds for all $t\geq 0$ and $N\geq 1$, where $\|f\|_{\infty,F_N}=\max\{|f(x)|:0\leq x\leq F_n\}$.

\section{The Overloaded Network}\label{FluidSec}
In this section, it is proved that a significant fraction of files is lost quickly if the network is not correctly dimensioned, i.e.  when the ratio $\rho=\lambda/\mu$ is less than $2\beta$.  In this case, for a large $N$, the fraction of files with two copies at time $t$, $(F_N-X_0^N(t)-X_1^N(t))/N$ is close to $\rho/2$ if $t$ is large enough. As a consequence $(\beta-\rho/2)N$ files are lost and the network stabilizes with a subset of files with two copies whose cardinality is of the order of $\rho/2$. This is the critical case which is analyzed in Section~\ref{heavysec} where it is proved that the number of files lost files is of the order of $\sqrt{N}$. When $\rho>2\beta$, no file is lost at the fluid level. This case is investigated precisely in Section~\ref{AverageSec}.  
\begin{theorem}[Fluid Equations]\label{fluidtheo}
If  $(X_0^N(0),X_1^N(0))$ is some fixed element of ${\cal S}$ and
$\lim_{N\to+\infty}{F_N}/{N}=\beta $
then the sequence of processes $({X_0^N(t)}/{N}, {X_1^N(t)}/{N})$ converges in distribution to 
\[
\begin{cases}
\left[(\beta-\rho/2)(1- 2e^{-\mu t}+e^{-2\mu t}),(2\beta-\rho)\left(e^{-\mu t}-e^{-2\mu t}\right)\right]
& \text{ if } \rho\leq 2\beta,\\
(0,0)& \text{ if } \rho > 2\beta. 
\end{cases}
\]
\end{theorem}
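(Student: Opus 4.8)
The plan is to apply the martingale method for fluid limits to the rescaled process $(\bar X_0^N(t),\bar X_1^N(t))=(X_0^N(t)/N,X_1^N(t)/N)$, reducing the identification of the limit to a one-dimensional Skorokhod reflection at $0$. First I would start from the representations \eqref{SDE1} and \eqref{SDE02}. Dividing \eqref{SDE1} by $N$ gives
\[
\bar X_0^N(t)=\bar X_0^N(0)+\mu\int_0^t \bar X_1^N(u)\,\diff u+\frac{M_0^N(t)}{N},
\]
and compensating \eqref{SDE02} (i.e. taking $f$ the identity in the drift of \eqref{SDE2}), after rewriting $\ind{x_1>0}=1-\ind{x_1=0}$, yields
\[
\bar X_1^N(t)=\bar X_1^N(0)+\int_0^t\Bigl[2\mu\bigl(\tfrac{F_N}{N}-\bar X_0^N-\bar X_1^N\bigr)-\mu \bar X_1^N-\lambda\Bigr]\diff u+R^N(t)+\frac{M_1^N(t)}{N},
\]
where $R^N(t)=\lambda\int_0^t\ind{X_1^N(u)=0}\,\diff u$. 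By the $L^2$ bounds quoted after \eqref{SDE1} and \eqref{SDE2}, $\E((M_i^N(t)/N)^2)\le C_i t/N$, so Doob's inequality shows that both martingale terms vanish uniformly on compact sets.

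Next I would prove $C$-tightness of the triple $(\bar X_0^N,\bar X_1^N,R^N)$. On $\cal S$ one has $0\le \bar X_i^N\le F_N/N$, so the integrands in the drifts are uniformly bounded and the finite-variation parts are Lipschitz in $t$ with a constant independent of $N$; $R^N$ is non-decreasing and $\lambda$-Lipschitz, and the rescaled coordinates have jumps of size $1/N$. Standard criteria then give tightness with continuous limit points. Along a convergent subsequence $(\bar X_0^N,\bar X_1^N,R^N)\to(x_0,x_1,R)$, passing to the limit (using $F_N/N\to\beta$ and $x_0+x_1\le\beta$) gives
\[
x_0(t)=\mu\int_0^t x_1(u)\,\diff u,\qquad x_1(t)=\int_0^t\bigl[2\mu(\beta-x_0-x_1)-\mu x_1-\lambda\bigr]\diff u+R(t),
\]
with $x_1\ge0$, $R$ non-decreasing and $R(0)=0$.

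The step I expect to be the main obstacle is to show that $(x_1,R)$ solves the Skorokhod problem at $0$, i.e. the complementarity relation $\int_0^\infty x_1(t)\,\diff R(t)=0$. At the pre-limit level this holds exactly: $R^N$ increases only on $\{X_1^N=0\}=\{\bar X_1^N=0\}$ and, crucially, from $X_1^N=0$ the only admissible transition is upward (duplication is switched off by $\ind{x_1>0}$, while the loss rate $\mu x_1$ also vanishes), so there is no overshoot and $\bar X_1^N$ is genuinely the reflected version of its free process. To pass this to the limit I would argue that on any interval where $x_1>0$ one has $X_1^N\to+\infty$, hence $\ind{X_1^N=0}=0$ and $R^N$, and therefore $R$, is constant there; this yields $\int x_1\,\diff R=0$. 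Invoking the uniqueness and Lipschitz continuity of the one-dimensional Skorokhod map (the material gathered in the appendix), together with a Gronwall estimate, then identifies $(x_0,x_1,R)$ uniquely and upgrades subsequential convergence to convergence of the whole sequence.

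Finally I would solve the limiting reflected system. At the origin the free drift of $x_1$ is $2\mu\beta-\lambda=\mu(2\beta-\rho)$. When $\rho<2\beta$ this is positive, so $x_1$ leaves $0$ at once, the reflection is inactive ($R\equiv0$), and the system reduces to the linear ODE $\dot x_0=\mu x_1$, $\dot x_1=2\mu(\beta-x_0-x_1)-\mu x_1-\lambda$; solving with $x_0(0)=x_1(0)=0$ gives
\[
\bigl(x_0(t),x_1(t)\bigr)=\Bigl[(\beta-\rho/2)(1-2e^{-\mu t}+e^{-2\mu t}),\,(2\beta-\rho)(e^{-\mu t}-e^{-2\mu t})\Bigr],
\]
and a posteriori $x_1\ge0$ confirms $R\equiv0$ by uniqueness. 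When $\rho\ge2\beta$ the free drift at the origin is non-positive, the reflection keeps $x_1\equiv0$ (with $\diff R/\diff t=\lambda-2\mu\beta\ge0$), whence $\dot x_0=\mu x_1=0$ and $(x_0,x_1)\equiv(0,0)$; for $\rho=2\beta$ this coincides with the first formula, matching the statement.
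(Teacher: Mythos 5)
Your proposal is correct and follows essentially the same route as the paper: martingale decomposition, vanishing of the rescaled martingales, tightness, identification of the limit as the solution of a generalized Skorokhod problem whose uniqueness comes from the appendix, and explicit solution of the limiting reflected ODE in the two regimes. The only (cosmetic) difference is in how the reflection is carried to the limit: the paper applies the Lipschitz continuity of the Skorokhod map to the convergence $Z^N\to y$, whereas you verify the complementarity condition $\int x_1\,\diff R=0$ directly from the fact that $x_1>0$ on an interval forces $X_1^N\to+\infty$ there; both arguments are standard and equivalent here.
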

\begin{proof}
Let $(X^N_0(0),X^N_1(0))=(y_0,y_1)\in {\cal S}$. 
Equations~\eqref{SDE1} and~\eqref{SDE2}, with the function $f\equiv\rm{Id}$ on $[0,F_N]$,  can be written as 
\begin{align}
\frac{X^N_0(t)}{N}&=\frac{y_0}{N}+\mu\int_0^t \frac{X^N_1(u)}{N}\,\diff u+\frac{M_0^N(t)}{N},\label{eq11}\\
\label{eq2}
\frac{X^N_1(t)}{N}&=\frac{y_1}{N}+2\mu\int_0^t \left(\frac{F_N}{N}-\frac{X^N_1(u)+X^N_0(u)}{N}\right)\,\diff u-\lambda t\\
&-\mu\int_0^t\frac{X_1^N(u)}{N}\,\diff u +\frac{M_{1}^N(t)}{N}+\lambda\int_0^t\ind{X_1^N(u)=0}\,\diff u.\notag
\end{align}
Doob's Inequality and the bounds on the second moments of the associated martingales show that, for $i=0$, $1$ and $t\geq 0$, 
\[
\P\left(\sup_{0\leq s\leq t}\frac{M_i^N(s)}{N}\geq \eps\right)\leq \frac{1}{\eps^2}\E(M_i(t)^2)\leq \frac{1}{N}\frac{C_i t}{\eps^2}.
\]
 Therefore, the two sequences of processes $(M_0^N(t)/N)$ and
$(M_1^N(t)/N)$ converge in distribution to $0$ uniformly on compact sets. 

For $T>0$, $\delta>0$ and   for $i=0$, $1$, define  $w^T_{X_i^N}(\delta)$ as the modulus of 
continuity of the process $(X_i^N(t))$ on the interval $[0,T]$,
\begin{equation}\label{modcont}
w^T_{X_i^N}(\delta)=\sup_{0\leq s\leq t\leq T,\,|t-s|\leq \delta} \left|X_i^N(t)-X_i^N(s)\right|. 
\end{equation}
By using the fact that,  for some constant $C$, $X_i^N(t)\leq F_N\leq CN$ for all $N\in\N$
and $t\geq 0$, the above equations and the convergence of the martingales to $0$ give that, 
for any $\eps>0$ and $\eta>0$, there exists $\delta>0$ such that the relation
$\P(w^T_{X_i^N}(\delta)\geq \eta)\leq \eps$ holds for all $N$.

This implies that the sequence of stochastic processes $(X_0^N(t)/N,X_1^N(t)/N)$ is
tight. See Billingsley~\cite{Billingsley} for example. 
One denotes by $(x_0(t),x_1(t))$ a limiting value for some subsequence
$(N_k)$. From Equation~\eqref{eq11}, one gets the relation
\[
x_0(t)=\mu\int_0^t x_1(u)\,\diff u.
\]
Define
\begin{equation}\label{eqz}
Z^N(t)= \mu\int_0^t \left(\frac{2F_N}{N}-\frac{3X^N_1(u)}{N}-\frac{2X^N_0(u)}{N}\right)\,\diff u-\lambda t +\frac{M_{1}^N(t)}{N},
\end{equation}
Equation~\eqref{eq2} can be also interpreted as the fact that 
\begin{equation}\label{xrn}
(X_Z^N(t),R_Z^N(t))\stackrel{\text{def.}}{=}\left(X_1^N(t)/N, \lambda \int_0^t\ind{X_1^N(u)=0}\,\diff u\right)
\end{equation}
is the unique solution of the Skorokhod problem associated to the process $(Z^N(t))$. See Appendix for a definition. 

The sequence  $(Z^{N_k}(t))$ is converging in  distribution and by the continuous mapping theorem
\begin{align}\label{aux1}
\lim_{k\to+\infty}(Z^{N_k}(t))=y(t)&\stackrel{\text{def.}}{=}
\mu\int_0^t \left(2\beta-2x_0(u)-3x_1(u)\right)\,\diff u-\lambda t\\
&=(2\mu\beta-\lambda)t-\mu\int_0^t\left(3 x_1(u)+2\mu\int_0^u x_1(v)\,\diff v\right)\,\diff u\notag
\end{align}
The solutions of Skorokhod problems being continuous with
respect to the process $(Z^N(t))$, see Appendix~D of Robert~\cite{Robert} for example, one
gets that  $(X_Z^N(t),R_Z^N(t))$ converges in distribution to the solution $(x_y(t),r_y(t))$ of the Skorokhod
problem associated to $(y(t))$. Since $x_y(t)=x_1(t)$ and $y(t)=F(x_1)(t)$ with
\[
F(x)(t)=(2\mu\beta-\lambda)t-\mu\int_0^t\left(3x(u)+2\mu\int_0^u x(v)\,\diff v\right)\,\diff u,
\]
the process $(x_1(t))$ is a solution of the generalized Skorokhod problem (GSP) associated to the functional $F$. See Appendix.  Proposition~\ref{GSPprop} shows that such a solution exists and is unique.  This implies that there is a unique, deterministic limiting value for the sequence $(X_0^N(t)/N,X_1^N(t)/N)$.  It is easy to check that the explicit expressions for $(x_0(t))$ and $(x_1(t))$ given in the statement of the theorem are indeed the solutions of the GSP. The convergence in distribution is therefore established.
\end{proof}

\section{The Critical case}\label{heavysec}
To complete  the picture of  the overloaded network  $\rho\leq 2\beta$, one  considers the
critical case $\rho=2\beta$.  As it will be seen, the convergence result is expressed  in
terms of a reflected stochastic differential equation. The  appendix  presents the
corresponding definition  and a  result of  existence and uniqueness.
\begin{theorem}\label{Heavy}
If $\lambda/\mu=2\beta$ and, for some $\gamma\in\R$, 
\[
\lim_{N\to+\infty} \frac{1}{\sqrt{N}}\left(F_N-N\frac{\rho}{2}\right)=\gamma
\text{ and } \lim_{N\to+\infty} \frac{X_1^N(0)}{\sqrt{N}}=y,
\]
and $X_0^N(0)=0$, then for the convergence in distribution
\[
\lim_{N\to+\infty}\left(\frac{X_0^N(t)}{\sqrt{N}},\frac{X_1^N(t)}{\sqrt{N}}\right)=\left(\mu\int_0^tY(u)\,\diff u,Y(t)\right), 
\]
where $(Y(t))$ is the solution starting at $y$ of  the  stochastic differential equation
\begin{equation}\label{rsde}
\diff Y(t)=\sqrt{2\lambda}\,\diff B(t) +\mu\left(2\gamma-3Y(t)-2\mu\int_0^t Y(u)\,\diff u\right)\,\diff t 
\end{equation}
reflected at $0$, i.e. with the constraint that $Y(t)\geq 0$, for all $t\geq 0$. The process $(B(t))$ is a standard Brownian motion on $\R$. 
\end{theorem}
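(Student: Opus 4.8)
The plan is to follow the same architecture as the proof of Theorem~\ref{fluidtheo}, but with the scaling $\sqrt{N}$ instead of $N$, and to show that the diffusive fluctuations survive in the limit. Set $\widetilde X_i^N(t)=X_i^N(t)/\sqrt N$ for $i=0,1$. The critical hypothesis $\lambda=2\mu\beta$ is precisely what makes the deterministic drift $(2\mu\beta-\lambda)t$ of Theorem~\ref{fluidtheo} vanish, so that at the scale $\sqrt N$ the leading-order term is no longer a linear slope but the residual second-order drift together with the martingale part. First I would rewrite the representations~\eqref{SDE1} and~\eqref{SDE2} (with $f=\mathrm{Id}$) divided by $\sqrt N$. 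For the $X_0$-component this gives immediately
\begin{equation*}
\widetilde X_0^N(t)=\frac{X_0^N(0)}{\sqrt N}+\mu\int_0^t \widetilde X_1^N(u)\,\diff u+\frac{M_0^N(t)}{\sqrt N},
\end{equation*}
and since $\langle M_0^N(t)\rangle=\mu\int_0^t X_1^N(u)\,\diff u$ is of order $\sqrt N\cdot t$ (because $X_1^N$ is itself of order $\sqrt N$), the rescaled martingale $M_0^N/\sqrt N$ has an increasing process tending to $0$; hence the $X_0$-limit is the integral $\mu\int_0^t Y(u)\,\diff u$ as claimed, once $\widetilde X_1^N\Rightarrow Y$ is established.

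The heart of the argument is the $X_1$-component. Dividing~\eqref{eq2} (equivalently~\eqref{eqz}) by $\sqrt N$ and using $F_N/N-X_1^N/N-X_0^N/N\to 0$ together with the exact centering $\lambda=2\mu\beta$, I would isolate the process
\begin{equation*}
Z^N(t)=\mu\int_0^t\left(\frac{2F_N}{\sqrt N}-2\mu\sqrt N t_{\text{corr}}-\frac{3X_1^N(u)}{\sqrt N}-\frac{2X_0^N(u)}{\sqrt N}\right)\diff u-\lambda t+\frac{M_1^N(t)}{\sqrt N},
\end{equation*}
whose Skorokhod reflection at $0$ is exactly $(\widetilde X_1^N,R^N)$ as in~\eqref{xrn}. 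The key computation is the convergence of the drift: writing $2F_N/\sqrt N-\lambda\sqrt N$ and using the hypothesis $(F_N-N\rho/2)/\sqrt N\to\gamma$, the constant and linear-in-$N$ pieces cancel and leave the finite drift $\mu(2\gamma-3Y(u)-2\mu\int_0^u Y(v)\,\diff v)$. The new and essential feature, absent at the fluid scale, is that the rescaled martingale $M_1^N/\sqrt N$ no longer vanishes. Its increasing process is the sum of the three jump intensities (from the $\mathcal N_{\lambda N}$, $\mathcal N_{\mu,i}$ and $\mathcal N_{2\mu,i}$ terms) divided by $N$; in the local-equilibrium regime where $X_1^N=O(\sqrt N)$ and $F_N-X_0^N-X_1^N\approx N\rho/2$, the duplication intensity $\lambda N$ and the two-copy-loss intensity $2\mu(F_N-X_0^N-X_1^N)\approx \lambda N$ each contribute $\lambda$ per unit time, giving $\langle M_1^N/\sqrt N\rangle(t)\to 2\lambda t$. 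Thus $M_1^N/\sqrt N$ converges in distribution to the Brownian term $\sqrt{2\lambda}\,B(t)$, which I would make rigorous via the martingale central limit theorem (functional CLT for the jump martingale, checking the Lindeberg/jump-size condition, which is routine since jumps are of size $1$ and the scaling is $\sqrt N$). I would collect these facts to show $Z^N\Rightarrow y(t)=\sqrt{2\lambda}\,B(t)+\mu\int_0^t(2\gamma-3Y(u)-2\mu\int_0^u Y(v)\,\diff v)\,\diff u$.

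With the convergence of the driving process $Z^N$ in hand, I would invoke continuity of the Skorokhod map (Appendix~D of Robert~\cite{Robert}) to pass $(\widetilde X_1^N,R^N)$ to the limit $(Y,r)$ solving the Skorokhod problem for $(y(t))$, which is exactly the reflected SDE~\eqref{rsde}. Uniqueness of the limit then follows from pathwise uniqueness for this reflected equation, which I would establish by the usual Gronwall argument: the drift is Lipschitz in the pair $(Y(t),\int_0^t Y)$ and the Skorokhod map is Lipschitz, so two solutions driven by the same Brownian motion coincide. The preliminary tightness step is analogous to Theorem~\ref{fluidtheo}: I would control the modulus of continuity via the bounds on the martingale increasing processes, now noting that because the diffusive part is nonvanishing the tightness must be argued in the Skorokhod $J_1$ topology with the martingale-CLT oscillation estimates rather than the crude $F_N\le CN$ bound. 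The main obstacle I anticipate is the joint control needed to identify the limiting quadratic variation as exactly $2\lambda$: one must show that throughout the relevant time interval the large intensities $\lambda N$ and $2\mu(F_N-X_0^N-X_1^N)$ genuinely balance to leading order, i.e.\ that $(F_N-X_0^N-X_1^N)/N\to\rho/2$ holds with sufficient uniformity. This is where the critical scaling is delicate, since a naive fluid estimate only gives the leading $N$ order and one needs the error to be $o(\sqrt N)$ in the martingale bracket; handling the reflection term $R^N$ (which could a priori inject mass) simultaneously is the technically demanding part of the proof.
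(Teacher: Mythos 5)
Your proposal follows essentially the same route as the paper: rescale by $\sqrt N$, represent $X_1^N/\sqrt N$ as the reflection of a driving process $Z^N$, show the martingale bracket converges to $2\lambda t$ from the balance of the duplication intensity $\lambda N$ and the two-copy loss intensity $2\mu(F_N-X_0^N-X_1^N)$, invoke continuity of the Skorokhod map, and settle uniqueness of the generalized (self-referential) reflected equation by a Gronwall/Picard argument, exactly as in the paper's Proposition~\ref{GSPprop}. Your one anticipated obstacle is actually harmless: since the bracket of $M_1^N/\sqrt N$ is $\croc{M_1^N}/N$, only the fluid-level estimate $(F_N-X_0^N-X_1^N)/N\to\rho/2$ (plus the vanishing of $\int_0^t\ind{X_1^N(u)=0}\,\diff u$, which the paper extracts from the null reflection term of the critical fluid limit) is needed, not an $o(\sqrt N)$ error bound.
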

The solution of SDE~\eqref{rsde}, is non-Markovian due to the integral term in the drift.
\begin{proof}
Equations~\eqref{SDE1}  and ~\eqref{SDE2}, with the function $f\equiv\rm{Id}$ on $[0,F_N]$,  can be written as 
\begin{align}
\overline{X}^N_0(t) &\stackrel{\text{def.}}{=}\frac{X^N_0(t)}{\sqrt{N}}=\mu\int_0^t \frac{X^N_1(u)}{\sqrt{N}}\,\diff u+\frac{M_0^N(t)}{\sqrt{N}},
\label{SDE11}\\
\overline{X}^N_1(t)&\stackrel{\text{def.}}{=}\frac{X^N_1(t)}{\sqrt{N}}=\frac{X^N_1(0)}{\sqrt{N}}+2\mu\int_0^t
\left(\gamma_N-\frac{X^N_1(u)}{\sqrt{N}}-\frac{X^N_0(u)}{\sqrt{N}}\right)\,\diff u\label{SDE22}\\&\qquad
-\mu\int_0^t \frac{X_1^N(u)}{\sqrt{N}}\,\diff u+\frac{M_{1}^N(t)}{\sqrt{N}}+ \lambda \sqrt{N}\int_0^t\ind{X_1^N(u)=0}\,\diff u,
\notag
\end{align}
with $\gamma_N=(F_N-N\rho/2)/\sqrt{N}$. 
With the same notations as in Section~\ref{modelsec}, the martingales $(M_0^N(t))$ and $(M_1^N(t))$ 
are 
\begin{align*}
M_0^N(t)&=\sum_{i=1}^{+\infty}\int_0^t \ind{i\leq X_1^N(u-)}[{\cal N}_{\mu, i}(\diff u)-\mu\,\diff u]\\
M_1^N(t)&=\sum_{i=1}^{+\infty} \int_0^t \ind{i\leq F_N- X_1^N(u-)-X_0^N(u-)}[{\cal N}_{2\mu}(\diff u)-2\mu\,\diff u]\\
&-M_0^N(t)
-\int_0^t \ind{X_1^N(u)>0} [{\cal N}_{\lambda N}(\diff u)-\lambda N\,\diff u].
\end{align*}
Their increasing processes are given by
\begin{align*}
\croc{\frac{1}{\sqrt{N}}M_0^N}(t)&= \mu\int_0^t \frac{X_1^N(u)}{N}\,\diff u,\\
\croc{\frac{1}{\sqrt{N}}M_1^N}(t)&
= 2\mu\int_0^t \left(\frac{F_N}{N}- \frac{X_1^N(u)}{N}-\frac{X_0^N(0)}{N}\right)\,\diff u
+\croc{\frac{1}{\sqrt{N}}M_0^N}(t)\\ &\qquad 
+\lambda \int_0^t\ind{X_1^N(u)>0}\,\diff u.
\end{align*}
The  last term of  the right  hand side  of the  above equation  is $(R^N(t))$  defined by
Equation~\eqref{xrn} in the proof of the  previous theorem.  It is the second component of
the  solution   to  the  Skorokhod  problem   associated  to  the   process  $(Z^N(t))$  of
Relation~\eqref{eqz}.   It has  been seen  that the  sequence of  processes  $(Z^N(t))$ is
converging  to  $(y(t))$  defined  in  Equation~\eqref{aux1}. In  this  case  $(y(t))$  is
identically $0$, the solution of the corresponding Skorokhod problem associated to $(y(t))$
is therefore $(0,0)$.
The continuity properties of the solutions of the Skorokhod  problem imply that the
process $(R^N(t))$ converges to  $0$. Consequently, by Theorem~\ref{fluidtheo} one gets the
convergence in distribution 
\[
\lim_{N\to+\infty} \left(\int_0^t\ind{X_1^N(u)=0}\,\diff u\right)=0
\]
and therefore
\[
\lim_{N\to+\infty} \left(\croc{\frac{1}{\sqrt{N}}M_0^N}(t)\right)=0 \text{ and }
\lim_{N\to+\infty}\left(\croc{\frac{1}{\sqrt{N}}M_1^N}(t)\right) = (2\lambda t).
\]
One deduces that $(\overline{M}_1^N(t))\stackrel{\text{def.}}{=}(M_1^N(t)/\sqrt{N})$ converges to $(\sqrt{2\lambda}B(t)))$ where $(B(t))$ is a standard Brownian motion and that $(\overline{M}_0^N(t))\stackrel{\text{def.}}{=}(M_0^N/\sqrt{N})$ converges to $0$.  See Ethier and Kurtz~\cite{Ethier}  for example. 

One now proves that the processes 
\[
\left(\overline{X}_0^N(t)\right)\stackrel{\text{def.}}{=}\left(\frac{X^N_0(t)}{\sqrt{N}}\right) \text{ and } 
\left(\overline{X}_1^N(t)\right)\stackrel{\text{def.}}{=}\left(\frac{X^N_1(t)}{\sqrt{N}}\right)
\]
are tight. If $(h(t))$ is a function $\R_+$, one
denotes, 
\[
\|h\|_{\infty,t}=\sup_{0\leq s\leq t} |h(s)|
\]
and $w_h^t(\cdot)$ is the modulus of continuity of $h$ defined by Equation~\eqref{modcont}. 
Equation~\eqref{SDE11} gives, for $0\leq t\leq T$,
\[
\left\|\overline{X}_0^N\right\|_{\infty,t}\leq  \left\|\overline{M}_0^N\right\|_{\infty,T}+\mu\int_0^t \left\|\overline{X}_1^N\right\|_{\infty,u}\,\diff u.
\]
Equation~\eqref{SDE22} shows that $(X_1^N(t)/\sqrt{N})$ is the
first coordinate of the solution of the Skorokhod problem associated to $(Z_1^N(t))$
defined by 
\begin{equation}\label{eqz1}
Z_1^N(t)\stackrel{\text{def.}}{=} y_N+
\mu\int_0^t \left(2\gamma_N-3\frac{X^N_1(u)}{\sqrt{N}}+2\frac{X^N_0(u)}{\sqrt{N}}\right)\,\diff u +\frac{M_{1}^N(t)}{\sqrt{N}},
\end{equation}
with $y_N=X^N_1(0)/\sqrt{N}$. By using the explicit representation of the solution of a Skorokhod problem in dimension~1, one has 
\[
\|\overline{X}_1^N\|_{\infty,t}\leq 2\|Z_1^N\|_{\infty,t}, \text{ for }0\leq t\leq T,
\]
see Appendix~D of Robert~\cite{Robert} for example, then 
\begin{align*}
\left\|\overline{X}_1^N\right\|_{\infty,t}&\leq 2 y_N+ 4\mu\gamma_N T +2\left\|\overline{M}_1^N\right\|_{\infty,T}
+4\mu \int_0^t \left(\left\|\overline{X}_0^N\right\|_{\infty,u}+\left\|\overline{X}_1^N\right\|_{\infty,u}\right)\,\diff u\\
& \leq U^N(T) + 
(4+\mu T)\mu\int_0^t \left\|\overline{X}_1^N\right\|_{\infty,u}\,\diff u,
\end{align*}
with
$U^N(T) \stackrel{\text{def.}}{=} 2y_N+4\mu\gamma_N T +2\|\overline{M}_1^N\|_{\infty,T}
+4\mu T\|\overline{M}_0^N\|_{\infty,T}$.

Gronwall's Inequality gives that the relation 
$ \|\overline{X}_1^N\|_{\infty,t}\leq U^N(T) \exp((4+\mu T)\mu t)$, 
holds for $0\leq t\leq T$, and, consequently, 
$$\left\|\overline{X}_0^N\right\|_{\infty,t}\leq \mu T U^N(T) e^{(4+\mu T)\mu t}+ 
\left\|\overline{M}_0^N\right\|_{\infty,T}.$$
The convergence of martingales shows that the two sequences of random variables $(U^N(T))$
and  $\|\overline{M}_0^N\|_{\infty,T}$ converge in  distribution. Consequently, for $\eps>0$, there exists some
$K>0$ such that for $i=0$, $1$ and all $N\geq 0$,
\[
\P\left(\|\overline{X}_i^N\|_{\infty,t}>K\right)\leq \eps.
\]
If $\eta>0$, there exists $N_0$ and $\delta$ sufficiently small so that, for all $N\geq N_0$, 
\[
2\mu\delta T(\gamma_N+2K)<\eta/2 \text{ and } \P\left(w_{\overline{M}_1^N}^T\geq \eta\right) \leq \eps.
\]
The last relation coming from the fact that the sequence $(\overline{M}_1^N(t))$  is
converging in distribution to a continuous process.  One gets  finally
\begin{align*}
\P&\left(w_{Z_1^N}^T(\delta) \geq \eta\right)\leq 
\P\left(2\mu\delta T\left[\gamma_N+\left\|\overline{X}_0^N\right\|_{\infty,T}+\left\|\overline{X}_1^N\right\|_{\infty,T}\right]+w_{\overline{M}_1^N}^T(\delta)\geq \eta\right)\\
&\leq \P\left(\left\|\overline{X}_0^N\right\|_{\infty,T}\geq K \right)+\P\left(\left\|\overline{X}_1^N\right\|_{\infty,T}\geq K \right)+
\P\left(w_{\overline{M}_1^N}^T(\delta)\geq \eta/2\right)\leq 3\eps.
\end{align*}
The sequence $(Z_1^N(t))$ is therefore tight, by continuity of the solution of the
Skorokhod problem the same property holds for $(X_1^N(t)/\sqrt{N})$ and consequently for
$(X_0^N(t)/\sqrt{N})$. 

If $(Y_0(t),Y_1(t))$ is a limit of a subsequence
$[(X_0^{N_k}(t)/\sqrt{N_k},X_1^{N_k}(t)/\sqrt{N_k})]$. By Equation~\eqref{SDE11} and~\eqref{SDE22}, one gets that
\[
\left(\frac{X_1^{N_k}(t)}{\sqrt{N_k}},\lambda \sqrt{N}\int_0^t\ind{X_1^N(u)=0}\,\diff u\right)
\]
converges in distribution to the solution of the Skorokhod problem associated to the process
\[
\left(y+\sqrt{2\lambda}B(t) + \mu\int_0^t \left(2\gamma-3Y_1(u)-2\mu\int_0^u Y_1(v)\,\diff v\right)\,\diff u\right).
\]
One concludes that $(Y_1(t))$ is the solution of the generalized Skorokhod problem for the
functional $F$ defined by 
\[
F(h)(t)=y+\sqrt{2\lambda}B(t) + \mu\int_0^t \left(2\gamma-3h(u)-2\mu\int_0^u h(v)\,\diff v\right)\,\diff u.
\]
Proposition~\ref{GSPprop} in the appendix shows that there is a unique solution
$(Y_1(t))$ and consequently a unique limit $(Y_0(t),Y_1(t))$. The theorem is proved.
\end{proof}
\section{The Time Scales of the Stable Network}\label{AverageSec}
The asymptotic properties of the network are investigated under the condition $\rho=\lambda/\mu>2\beta$. In Section~\ref{FluidSec} it has been shown that, in this case, the system is stable at the fluid level, i.e. that the fraction of lost files is $0$. Of course this does not change the fact that the system is still transient with the absorbing state $(F_N,0)$. To have a precise idea on how the system reaches this state, there are three interesting time scales to consider:
\begin{enumerate}
\item Slow time scale: $t\to t/N$,
\item Normal time scale: $t\to t$,
\item Linear time scale: $t\to Nt$,
\end{enumerate}
they are investigated successively in this section. 
The following  elementary lemma will be used throughout the section. 
\begin{lemma}\label{domlem}
If   $\rho=\lambda/\mu>2\beta$,  for any $\beta_0>\beta$ such that $\lambda/\mu >2\beta_0$, $\eps>0$, $\eta>0$ and $T>0$, there exists $N_0\in\N$ such that
\begin{enumerate}
\item Coupling: there exists a probability space where the relation  
\[
X^N_1(t)\leq L_{\beta_0}(N t), \forall t\geq 0,
\]
holds  for all $N\geq N_0$  and $t\geq 0$, with $(L_{\beta_0}(t))$  the process of the number of customers of an ergodic $M/M/1$ queue with arrival rate $2\mu \beta_0$ and service rate $\lambda$ and with initial condition $L_{\beta_0}(0)=X^N_1(0)$. 
\item The relation 
\[
\P\left[\sup_{\substack{0\leq s, t \leq T, \\|t-s|\leq \delta}} \frac{1}{N}\int_{sN}^{tN} L_{\beta_0}(u)\,\diff u >\eta\right]\leq \eps
\]
holds.
\end{enumerate}
\end{lemma}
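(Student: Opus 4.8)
The plan is to treat the two assertions separately: a pathwise stochastic domination for part~(1), and an ergodic time-average estimate for $L_{\beta_0}$ for part~(2). For part~(1) I would first read off from the $Q$-matrix~\eqref{Qmat} the one-dimensional dynamics of $(X_1^N(t))$: this coordinate increases by one at rate $2\mu(F_N-X_0^N-X_1^N)$ (a two-copy file losing a copy) and decreases by one at rate $\lambda N\ind{X_1^N>0}+\mu X_1^N$ (a duplication, or the loss of a last copy). Since $F_N/N\to\beta<\beta_0$, I can pick $N_0$ with $F_N\le\beta_0 N$ for all $N\ge N_0$; then the birth rate of $X_1^N$ is at most $2\mu\beta_0 N$ and its death rate is at least $\lambda N\ind{X_1^N>0}$, which are exactly the birth and death rates of the time-changed queue $(L_{\beta_0}(Nt))$. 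I would realize both processes on one probability space using the Poisson processes of~\eqref{SDE01}--\eqref{SDE02}: let the down-steps of $(L_{\beta_0}(Nt))$ be driven by the very clock ${\cal N}_{\lambda N}$ producing the duplications of $X_1^N$, and let its up-steps consist of the up-steps of $X_1^N$ (carried by the ${\cal N}_{2\mu,i}$) together with an independent surplus bringing the rate up to $2\mu\beta_0 N$. Starting from $L_{\beta_0}(0)=X_1^N(0)$, an induction on successive jump epochs shows the gap $L_{\beta_0}(N\cdot)-X_1^N$ stays nonnegative: every up-step of $X_1^N$ is matched by an up-step of $(L_{\beta_0}(Nt))$, and whenever $X_1^N>0$ a firing of ${\cal N}_{\lambda N}$ lowers both coordinates, so the gap can only close from strictly positive values. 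Ergodicity of $L_{\beta_0}$ is immediate from $2\mu\beta_0<\lambda$, i.e. from $\lambda/\mu>2\beta_0$.

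For part~(2), I would change variables $u=Nv$ to write $\frac1N\int_{sN}^{tN} L_{\beta_0}(u)\,\diff u=G_N(t)-G_N(s)$, where $G_N(r)=\frac1N\int_0^{Nr}L_{\beta_0}(u)\,\diff u$. Since $L_{\beta_0}$ is positive recurrent, the ergodic theorem gives $\frac1\tau\int_0^\tau L_{\beta_0}(u)\,\diff u\to m$ almost surely, with $m=(2\mu\beta_0/\lambda)/(1-2\mu\beta_0/\lambda)$, whence $G_N(r)\to mr$ for each fixed $r$. Because each $G_N$ is nondecreasing and the limit $r\mapsto mr$ is continuous, pointwise convergence upgrades to uniform convergence on $[0,T]$ (the classical Dini/P\'olya fact that pointwise convergence of monotone functions to a continuous limit is uniform on compacts). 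Writing $G_N(t)-G_N(s)=(G_N(t)-mt)-(G_N(s)-ms)+m(t-s)$ then gives, for $0\le s\le t\le T$ with $t-s\le\delta$,
\[
0\le G_N(t)-G_N(s)\le 2\sup_{0\le r\le T}\left|G_N(r)-mr\right|+m\delta.
\]
Choosing $\delta<\eta/(2m)$, and then using the uniform convergence to take $N_0$ large enough that $\P\bigl(\sup_{r\le T}|G_N(r)-mr|>\eta/4\bigr)\le\eps$ for $N\ge N_0$, completes the bound.

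The only genuinely delicate point, and the main obstacle, is the pathwise coupling in part~(1): one must check that the shared-clock/surplus construction is consistently defined for the two coordinates and that the induction on jump epochs really preserves the order, which uses that the rate comparison holds precisely on the diagonal $\{X_1^N=L_{\beta_0}(N\cdot)\}$ and not merely at the level of marginal laws. A secondary caveat affects the ergodic step: the initial condition $L_{\beta_0}(0)=X_1^N(0)$ depends on $N$. The limit $m$ is independent of the starting state, so the argument is unaffected provided the contribution of the initial transient to $G_N$ is negligible, which holds as soon as $X_1^N(0)=o(N)$ (the regime in which the lemma is applied); in a less favorable regime one would first couple $L_{\beta_0}$ to its stationary version over an initial window and absorb the discrepancy into the $\eta/4$ budget above.
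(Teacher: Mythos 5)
Your argument is correct. For part~(1) you are doing essentially what the paper does, only in more detail: the paper simply picks $N_0$ with $F_N\le \beta_0 N$ and invokes the standard monotone coupling of the birth--death process $(X_1^N(t))$ (birth rate $2\mu(F_N-X_0^N-X_1^N)\le 2\mu\beta_0 N$, death rate at least $\lambda N\ind{x>0}$) with the $M/M/1$ queue $(L_{\beta_0}(Nt))$; your shared-clock/surplus construction --- e.g.\ driving the up-steps of $L_{\beta_0}(N\cdot)$ by $\sum_{i\le\lfloor\beta_0N\rfloor}{\cal N}_{2\mu,i}$ and its down-steps by the same clock ${\cal N}_{\lambda N}$ --- is the standard way to make this rigorous, and your induction on jump epochs correctly isolates the only delicate case (the order can only be threatened on the diagonal, where every transition preserves it). For part~(2) you take a genuinely different route: the paper bounds the windowed integral by $\delta\sup_{[0,NT]}L_{\beta_0}$ and controls the running maximum of the ergodic queue over $[0,NT]$ via exponential estimates on hitting times of high levels (Proposition~5.11 of Robert's book), whereas you apply the ergodic theorem to $G_N(r)=\frac1N\int_0^{Nr}L_{\beta_0}(u)\,\diff u$, upgrade the pointwise a.s.\ convergence $G_N(r)\to mr$ to uniform convergence on $[0,T]$ by P\'olya's lemma for monotone functions with continuous limit, and read off the modulus of continuity. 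Your version has the merit of making explicit the constraint $\delta<\eta/(2m)$ under which the claim holds (the $\delta$ is only implicitly quantified in the statement, and the bound cannot hold for all $\delta$ since $G_N(T)-G_N(0)\to mT$), and it sidesteps the choice of level in the paper's hitting-time bound, which as printed ($\lfloor\eta N\rfloor$ rather than a level of order $\eta/\delta$) does not match the crude oscillation estimate. Your closing caveat about the $N$-dependence of $L_{\beta_0}(0)=X_1^N(0)$ is fair but harmless in context, since in every application the initial state is fixed (indeed $X_1^N(0)=0$) and the ergodic limit $m$ does not depend on the starting point.
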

\begin{proof}
There exists some $\beta_0\geq \beta$ and $N_0\geq 1$ such that $\lambda>2\beta_0\mu$ and that
$F_N\leq N\beta_0$ for $N\geq N_0$. It is enough to take the $M/M/1$ with arrival
rate $2\mu\beta_0$ and service rate $\lambda$. 

Denote by ${\cal A}$ the event on the left hand side of the  last relation to prove. If, for $x\in \N$,  $\tau_x$ denotes the hitting time of  $x$ by the process $(L_{\beta_0}(t))$, for $\delta<1/2$,  one has  
\[
\P({\cal A})\leq \P\left(\tau_{\lfloor \eta N\rfloor}\leq NT\right).
\]
By ergodicity of this process and  Proposition~5.11 of Robert~\cite{Robert} for example, there exists some $0<\alpha<1$ such that the sequence $(\alpha^N \tau_{\lfloor N\eta\rfloor})$ converges in distribution. The last term of the above relation is thus  arbitrarily small as $N$ gets large. 
\end{proof}

\subsection*{The slow time scale}
A description of the asymptotic behavior for the slow time scale is
presented informally. From Relation~\eqref{Qmat}, one can see that the $Q$-matrix of the process on the slow
time scale $(X_0^N(t/N),X_1^N(t/N))$ has the following asymptotic expansion
\[
\lim_{N\to+\infty}
\begin{cases}
q^N(x,x+e_1)&= 2\mu \beta,\\
q^N(x,x-e_1)&= \lambda \ind{x_1>0},\\
q^N(x,x-e_1+e_0)&= 0.
\end{cases}
\]
With elementary arguments which are skipped one can easily get the following proposition. It states that, on the slow time scale, with probability $1$ no file is lost at all in the limit. 
\begin{prop}
The sequence of processes $(X_0^N(t/N),X_1^N(t/N))$ converges in distribution to the process $(0,L_\beta(t))$, where $(L_\beta(t))$ is the process of the number of jobs of
an $M/M/1$ queue with arrival rate $2\mu\beta$ and service rate $\lambda$. 
\end{prop}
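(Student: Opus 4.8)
The plan is to establish the convergence of the slow-time-scale process $(X_0^N(t/N),X_1^N(t/N))$ to $(0, L_\beta(t))$ by combining a martingale/tightness argument for the rescaled process with the identification of the limiting $Q$-matrix already exhibited informally before the statement. The guiding principle is that on the slow time scale the loss mechanism (the term $\mu x_1$ sending a file from one copy to zero copies) is of lower order and vanishes in the limit, while the duplication and single-copy-loss dynamics converge to those of an $M/M/1$ queue with arrival rate $2\mu\beta$ and service rate $\lambda$.

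\medskip

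First I would write the SDEs \eqref{SDE01}--\eqref{SDE02} on the slow time scale, replacing $t$ by $t/N$. Under this change the Poisson clock ${\cal N}_{\lambda N}$ fires at the effective rate $\lambda N \cdot (1/N) = \lambda$, and the duplication clocks ${\cal N}_{2\mu,i}$ acting on the $\approx F_N \sim \beta N$ files with two copies produce an effective arrival rate $2\mu F_N/N \to 2\mu\beta$; these are exactly the parameters of the claimed $M/M/1$ limit. The key point is that the loss clocks ${\cal N}_{\mu,i}$, which drive $X_0^N$ and also contribute a $-\mu x_1$ term to $X_1^N$, now fire at effective rate $\mu x_1 \cdot (1/N)$, which is $O(1/N)$ since on this time scale $X_1^N$ stays bounded (it behaves like a stable queue). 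I would make this precise by using Lemma~\ref{domlem}: the coupling gives $X_1^N(t/N) \le L_{\beta_0}(t)$ for a stable dominating $M/M/1$ queue, so $X_1^N(t/N)$ is tight and bounded in probability uniformly on compacts, and the integrated loss term $\mu \int_0^{t/N} X_1^N(u)\,\diff u$, which governs $X_0^N(t/N)$, is $O(1/N) \to 0$. This forces $X_0^N(t/N) \to 0$ and, since $X_0^N$ vanishes, removes the $-\mu\int X_1^N$ correction from the $X_1^N$ equation in the limit.

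\medskip

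With $X_0^N(t/N)\to 0$ in hand, the limiting generator acting on $X_1^N(t/N)$ is precisely $\Omega[\beta]$ from \eqref{Omega} with $y=\beta$, namely $2\mu\beta(f(x+1)-f(x)) + \lambda\ind{x>0}(f(x-1)-f(x))$, which is the generator of the $M/M/1$ queue $(L_\beta(t))$. The cleanest route is to verify, for each $f\in C_c(\N)$, that the martingale representation \eqref{SDE2} evaluated at $t/N$ converges: the slow loss term $\mu\int_0^{t/N}[f(X_1^N-1)-f(X_1^N)]X_1^N\,\diff u$ is $O(1/N)$ and vanishes, the $\Omega$ term converges to $\int_0^t \Omega[\beta](f)(L_\beta(u))\,\diff u$ because $F_N/N - (X_0^N+X_1^N)/N \to \beta$, and the martingale term vanishes in $L^2$ by the bound $\E(M_1^N(t)^2)\le C_1 N\|f\|_{\infty,F_N}(t/N)$, which is $O(\|f\|_{\infty,F_N})$ but whose quadratic variation one checks to be $O(1)$ and the jumps $O(1/\sqrt{N})$ so that the compensated process is a tight family converging to zero. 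Tightness of $(X_1^N(t/N))$ follows from the coupling in Lemma~\ref{domlem} together with the modulus-of-continuity control, exactly as in the proofs of Theorems~\ref{fluidtheo} and~\ref{Heavy}; any subsequential limit then solves the well-posed $M/M/1$ martingale problem, giving the unique limit $(L_\beta(t))$.

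\medskip

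The main obstacle is the careful bookkeeping of the martingale term's quadratic variation on the slow time scale. The naive bound $\E(M_1^N(t)^2)\le C_1 N\|f\|_{\infty}t$ becomes, after time change, $O(\|f\|_{\infty})$ and does not obviously vanish; one must instead examine the increasing process directly. Since the duplication and single-copy-loss jumps each occur at $O(N)$ rate but produce jumps of size $f(x\pm 1)-f(x)=O(\|f\|_\infty)$, the compensated martingale has quadratic variation $O(1)$ per unit slow time, so $M_1^N$ does \emph{not} vanish in general on the slow scale. This is in fact correct and expected: the limit is a genuine stochastic process (the $M/M/1$ queue), not a deterministic fluid limit, so the martingale should survive. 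The real work is therefore not in showing the martingale vanishes but in identifying its limit as the correct martingale for the $M/M/1$ generator $\Omega[\beta]$, which amounts to checking that the predictable quadratic variation converges to $\int_0^t \Omega[\beta](f^2) - 2f\Omega[\beta](f)\,(L_\beta(u))\,\diff u$. Because this is entirely standard for density-dependent jump Markov chains converging to a fixed-rate limit (the convergence of jump rates having already been recorded before the statement), the authors are justified in calling the argument elementary and skipping it; I would simply invoke the martingale-problem characterization of the $M/M/1$ queue together with the tightness from Lemma~\ref{domlem} to close the proof.
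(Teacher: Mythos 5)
Your proposal follows exactly the route the paper itself sketches (and then skips): on the time scale $t/N$ the $Q$-matrix converges to that of the pair (frozen first coordinate, $M/M/1$ queue with input rate $2\mu\beta$ and output rate $\lambda$), and the convergence is made rigorous through the domination of Lemma~\ref{domlem} plus the martingale-problem characterization of the limit. The treatment of $X_0^N(t/N)$ is correct: the compensator $\mu\int_0^{t/N}X_1^N(u)\,\diff u$ is bounded by $\frac{\mu}{N}\int_0^tL_{\beta_0}(v)\,\diff v=O(1/N)$ thanks to the coupling, so both the drift and the martingale $M_0^N(t/N)$ vanish and, $X_0^N$ being non-decreasing, the whole process goes to $0$ uniformly on compacts.

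One point needs to be cleaned up, because as written your third paragraph contradicts your fourth. In the third paragraph you assert that "the martingale term vanishes in $L^2$" and that the jumps are "$O(1/\sqrt{N})$"; neither is true here. There is no spatial rescaling, the jumps of $f(X_1^N(t/N))$ are of size $O(\|f\|_\infty)$, and the predictable quadratic variation of the time-changed martingale is of order $t$ (not $o(1)$): if $M_1^N$ vanished, the limit of $X_1^N(t/N)$ would be deterministic, which is incompatible with the claimed $M/M/1$ limit. Your final paragraph says precisely this and identifies the correct argument --- any subsequential limit solves the well-posed martingale problem for $\Omega[\beta]$, because the loss term $\mu\int_0^{t/N}\Delta^-(f)(X_1^N)X_1^N\,\diff u$ is $O(1/N)$ and $F_N/N-(X_0^N+X_1^N)(t/N)/N\to\beta$ uniformly on compacts (again by the coupling, since $X_1^N(t/N)\le L_{\beta_0}(t)$ is tight). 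Keep only that version: the proof is then complete and is the natural rigorization of the "elementary arguments" the authors skip.
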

\subsection*{The normal time scale}
It is shown that, on the normal time scale, the stability does not only hold on the fluid level: almost surely there is a finite number of losses in any finite time interval, more precisely losses occur as a Poisson process.  See Proposition~\ref{NormalProp}.  The capacity $\lambda N$ of the network is thus able to maintain an almost complete set of files.  The following proposition shows in particular that the number of definitive losses at time $t>0$ is finite with a Poisson distribution. 

\begin{theorem}\label{NormalProp} If   $\rho=\lambda/\mu>2\beta$,
\begin{itemize} 
\item  the sequence of processes $(X_0^N(t))$ 
converges in  distribution  to  a Poisson point process  on $\R_+$ with rate
$2\mu\beta/(\rho-2\beta)$. 
\item For $t>0$, as $N$ goes to infinity, the random variable $X_1^N(t)$ converges in distribution to  a geometric distribution with parameter $2\beta/\rho$.  
\end{itemize}
\end{theorem}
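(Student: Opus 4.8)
The plan is to read this as a stochastic averaging principle in which $(X_1^N(t))$ is a fast ergodic component and the loss counter $(X_0^N(t))$ is a slow, rarely jumping component driven by it. On the normal time scale the birth rate of $(X_1^N(t))$ is $2\mu(F_N-X_0^N-X_1^N)$ and its death rate is $\lambda N+\mu X_1^N$; since $X_0^N(0)=X_1^N(0)=0$ and, by the coupling $X_1^N(t)\le L_{\beta_0}(Nt)$ of Lemma~\ref{domlem}, the fast variable stays tight with geometric tails uniformly in $N$, one has $X_0^N(t)+X_1^N(t)=o(N)$ on any finite interval. Hence $(X_1^N(t))$ behaves like the $M/M/1$ queue with generator $\Omega[\beta]$ of~\eqref{Omega} run at speed $N$, which is positive recurrent precisely because $\rho>2\beta$ and whose invariant law $\pi$ is the geometric distribution with parameter $2\beta/\rho$, with mean $\E_\pi(X_1)=2\beta/(\rho-2\beta)$.

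First I would establish the averaging identity using the martingale representation~\eqref{SDE2}. For $f\in C_c(\N)$, dividing~\eqref{SDE2} by $N$ and using $\E((M_1^N(t))^2)\le C_1 N\|f\|_{\infty,F_N}t$ shows that $M_1^N/N$ and $f(X_1^N(\cdot))/N$ vanish in probability, so that $\int_0^t \Omega[\beta](f)(X_1^N(u))\,\diff u\to 0$ uniformly on compacts, the replacement of $\Omega[F_N/N-(X_0^N+X_1^N)/N]$ by $\Omega[\beta]$ being justified by $X_0^N+X_1^N=o(N)$. Introducing the random occupation measures $\Lambda^N$ on $[0,T]\times\N$ defined by $\Lambda^N([0,t]\times A)=\int_0^t\ind{X_1^N(u)\in A}\,\diff u$ --- tight thanks to the geometric tail from Lemma~\ref{domlem} --- any limit point $\Lambda$ satisfies $\int\Omega[\beta](f)\,\diff\Lambda=0$ for all $t$ and all $f\in C_c(\N)$; disintegrating $\Lambda(\diff u,\diff k)=\diff u\,\pi_u(\diff k)$, this forces $\pi_u$ to be stationary for $\Omega[\beta]$ for a.e.\ $u$, hence $\pi_u=\pi$ by ergodicity and uniqueness of the invariant law. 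The limit $\diff u\otimes\pi$ being deterministic, the convergence holds in probability, and upgrading the observable from $C_c(\N)$ to $k\mapsto k$ through the uniform integrability supplied by the coupling yields
\[
\mu\int_0^t X_1^N(u)\,\diff u\;\longrightarrow\;\frac{2\mu\beta}{\rho-2\beta}\,t
\]
in probability, uniformly on compact sets.

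The first assertion is then immediate from~\eqref{SDE1}. With $X_0^N(0)=0$, the process $(X_0^N(t))$ is a simple counting process, its jumps being $+1$, with predictable compensator $A^N(t)=\mu\int_0^t X_1^N(u)\,\diff u$. The averaging identity shows that $A^N(t)$ converges in probability to the deterministic continuous function $\theta t$, $\theta=2\mu\beta/(\rho-2\beta)$; by the classical criterion that a sequence of simple point processes whose compensators converge in probability to a deterministic continuous limit converges in distribution to the Poisson process with that mean measure (see e.g. Ethier and Kurtz~\cite{Ethier}), $(X_0^N(t))$ converges to a Poisson process on $\R_+$ with rate $\theta$.

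For the second assertion I would argue by sandwiching, since the occupation measure only controls time averages and not the one-dimensional marginal. The upper coupling of Lemma~\ref{domlem} gives $X_1^N(t)\le L_{\beta_0}(Nt)$ with $\beta_0>\beta$; on the high probability event that $X_0^N+X_1^N$ stays $o(N)$ on $[0,T]$, a symmetric construction with a slightly slower queue gives a lower bound $X_1^N(t)\ge L_{\beta_1}(Nt)$ for $\beta_1<\beta$. Both bounding queues, started from $0$ and evaluated at their internal time $Nt\to+\infty$, reach by ergodicity their stationary geometric laws of parameters $2\beta_0/\rho$ and $2\beta_1/\rho$; letting $\beta_0\downarrow\beta$ and $\beta_1\uparrow\beta$ squeezes the law of $X_1^N(t)$ onto the geometric distribution with parameter $2\beta/\rho$. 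I expect the averaging step to be the main obstacle: the difficulty lies not in identifying $\pi$ but in the two passages to the unbounded observable $k\mapsto k$ --- controlling the tail of the occupation measure and ruling out mass escaping to infinity --- both of which rest on the uniform geometric control furnished by the coupling of Lemma~\ref{domlem}.
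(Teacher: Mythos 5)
Your proof is correct, and the two endgames (the compensator criterion for Poisson convergence of $(X_0^N(t))$, and the two-sided $M/M/1$ sandwich for the marginal law of $X_1^N(t)$) coincide with the paper's. Where you genuinely diverge is the central step, the convergence $\mu\int_0^t X_1^N(u)\,\diff u\to 2\mu\beta t/(\rho-2\beta)$: you run the full occupation-measure averaging argument --- tightness of the measures $\Lambda^N$, identification of the disintegrated densities $\pi_u$ as the invariant law of $\Omega[\beta]$ via the martingale identity~\eqref{SDE2} divided by $N$, then the truncation/uniform-integrability upgrade to the unbounded observable $k\mapsto k$. The paper instead gets this convergence by an elementary two-sided comparison: the coupling of Lemma~\ref{domlem} gives the almost sure upper bound $\limsup_N\int_0^tX_1^N\leq 2\beta_0 t/(\rho-2\beta_0)$ via the ergodic theorem for $(L_{\beta_0}(t))$, and a matching lower bound comes from a slightly slower $M/M/1$ queue constructed on the high-probability event $\{X_0^N(T)\leq K,\ \sup L_{\beta_0}\leq\delta N\}$; letting $\beta_0\downarrow\beta$ and $\delta\downarrow 0$ squeezes the limit. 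The paper's route is lighter precisely because on the normal time scale the slow component does not move at the $N$-scale, so the ``environment'' seen by $X_1^N$ is asymptotically the constant $\beta$ and no genuine averaging over a moving parameter is needed; your route is heavier but systematic, and it is in fact the machinery the paper deploys later for the linear time scale (Proposition~\ref{proppsi} and Theorem~\ref{theodec}), where you correctly flag the two delicate points (tail control of the occupation measures and non-escape of mass, both supplied by the coupling). One cosmetic caveat: your occupation-measure argument alone delivers the time-averaged law of $X_1^N$, not its marginal at a fixed $t$, so the sandwich in your last paragraph is not optional for the second bullet --- you treat it as such, which is right, and it is also exactly what the paper does (while skipping the details).
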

The second convergence is for the marginal distribution of $(X_1^N(s))$ at time $t$. One cannot expect a convergence in distribution of the sequence of processes $(X_1^N(t))$. Indeed, since the sequence of processes $(X_1^N(t/N))$ is converging  in distribution to the law of the $M/M/1$ process $(L_\beta(t))$,  for $0\leq s< t$, the distribution of $(X_1^N(s),X_1^N(t))$ and of $(L_\beta(Ns),L_\beta(Nt))$ are close. Between time $Ns$ and $Nt$, the $M/M/1$ ``forgets'' its location at time $Ns$ (just because it hits $0$ with probability $1$)  so that when $N$ goes to infinity the couple $(X_1^N(s),X_1^N(t))$ converges in distribution to the distribution of two independent geometric distributions. The sample paths of a possible limit of $(X_1^N(s))$ would not have regularity properties. 
\begin{proof}
Define
\[
\eta_N(t)\stackrel{\text{def.}}{=} \int_0^tX_1^N(u)\,\diff u,
\]
for $0\leq s\leq t$, the above lemma gives that, 
\[
\eta_N(t)-\eta_N(s)=\int_s^tX_1^N(u)\,\diff u\leq \int_s^tL_{\beta_0}(Nu)\,\diff u=\frac{1}{N}\int_{Ns}^{Nt}L_{\beta_0}(u)\,\diff u.
\]
The criteria of the modulus of continuity and Lemma~\ref{domlem} give that the sequence of processes $(\eta^N(t))$ is tight. 
The above inequality and the ergodic theorem applied to the ergodic Markov process $(L_{\beta_0}(t))$ show also that, almost surely,
\begin{equation}\label{supeq}
\limsup_{N\to+\infty}\int_0^tX_1^N(u)\,\diff u\leq \frac{2\beta_0}{\rho-2\beta_0}t.
\end{equation}

For $T>0$ fixed and $K$ 
\begin{align*}
\P(X_0^N(T)\geq K)&\leq \P\left(\mu\int_0^T X^N_1(u)\geq K/2\right)+\P\left(M_0^N(T)\geq K/2\right)\\
&\leq \P\left(\mu\int_0^T X^N_1(u)\geq K/2\right)+\frac{4}{K^2}\E\left(\mu\int_0^T X_1^N(u)\,\diff{u}\right).
\end{align*}
One can thus choose $K$ so that $\P(X_0^N(T)\geq K)\leq \eps$ holds for $N\geq N_0$ for some $N_0\in\N$. 
As in the proof of Lemma~\ref{domlem}, for $\delta>0$, there exists some $N_1\in\N$ such that if $N\geq N_1$ then 
\[
\P\left(\sup_{0\leq s\leq T} L_{\beta_0}(s)\geq \delta N\right)\leq \eps. 
\]
In the same way as in the proof of the above lemma, one can construct an $M/M/1$ process $(Z^N(t))$
whose arrival and service rates are respectively
\[
2\mu\left(\frac{F_N}{N}-\frac{K}{N}-\delta\right) \text{ and } \lambda, 
\]
and such that, on the event,
\[
{\cal A}_T\stackrel{\text{def.}}{=}\left\{X_0^N(T)\leq K, \sup_{0\leq t\leq NT} L_{\beta_0}(t)\leq \delta N\right\},
\]
the relation $X_1^N(t)\geq Z^N(Nt)$ holds for all $t\leq T$. Hence, almost surely, 
\begin{equation}\label{infeq}
\liminf_{N\to+\infty}\eta^N(t)\geq \liminf_{N\to+\infty} \frac{1}{N} \int_0^{Nt} Z^N(u)\,\diff u=\frac{2(\beta-\delta)}{\rho-2(\beta-\delta)}t.
\end{equation}
holds on ${\cal A}_T$. By letting $\delta$ go to $0$ and $\beta_0$ to $\beta$ in Equations~\eqref{supeq} and~\eqref{infeq} respectively, one gets that the variable $\eta^N(t)$ converges almost surely to $\alpha t$ with $\alpha={2\beta\mu}/{(\rho-2\beta)}$. Consequently, the tightness of the sequence of processes $(\eta^N(t))$ implies that it is converging in distribution to $(\alpha t)$. 

Note that $t\mapsto X_0^N(t)$ can also be seen as a point process with jumps of size $1$. 
By Equation~\eqref{SDE01},  one has
\[
\left(X^N_0(t)-\mu\int_0^t X^N_1(u)\,\diff u\right)
\]
is a martingale with respect to the natural filtration of the associated Poisson processes. The random measure
\[
\Lambda^N([0,t])=\mu\int_0^t X^N_1(u)\,\diff u
\]
is a {\em compensator} of the point process $t\mapsto X_0^N(t)$. See Kasahara and Watanabe~\cite{Kasahara}. It has therefore been shown that the sequence of compensators is converging to the deterministic measure $\alpha\,\diff x$. Theorem~5.1 of~\cite{Kasahara}, see also Brown~\cite{Brown},  gives the convergence in distribution of $(X_0^N(t))$ to a Poisson process with rate $\alpha$. 

In a similar way as before, through the convergence of the $Q$-matrix, the asymptotic distribution of $X_1^N(t)$ can be easily obtained by conditioning on the event $\{X_0^N(t)\leq K\}$ for $K$ large and by using arbitrarily close $M/M/1$ processes at equilibrium as upper and lower stochastic bounds for $X_1^N(t)$.  Details are skipped. 
\end{proof}
\subsection*{The linear time scale $\mathbf{t\to Nt}$}
On the linear time scale, it will be shown that a fraction $\Psi(t)$ of the files is lost at time $t$. In some way the linear time scale gives a picture of the decay of the network.

For $N\geq 1$, the random measure $\mu_N$ on $\N\times\R_+$ is defined as, for a measurable function $g:\N\times\R_+\to\R_+$,
\[
\croc{\mu_N,g}=\int_{\R_+} g(X^N_1(Nt),t)\,\diff t.
\]
Note that if $g(x,t)=h(x)\ind{[0,T]}(t)$ for $T>0$, then
\[
\croc{\mu_N,g}=\sum_{x\in\N} h(x)\frac{1}{N}\int_0^{NT} \ind{X_1^N(t)=x}\,\diff t.
\]
Consequently $(\mu_N)$ is relatively compact sequence of random Radon measures on $\N\times\R_+$. See Dawson~\cite{Dawson} for example. Note that the measure identically null can be a possible limit of this sequence.

From now on, one fixes $(N_k)$ such that $(\mu_{N_k})$ is a converging subsequence whose limit is $\nu$. By taking a convenient probability space, one can assume that the convergence of $(\mu_{N_k})$ holds almost surely for the weak convergence of Radon measures.

Since, for $N\geq 1$, $\mu_{N}$ is absolutely continuous with respect to the product of the counting measure on $\N$ and Lebesgue measure on $\R_+$, the same property holds for the limiting measure $\nu$. Let $(x,t)\to \pi_t(x)$ denote its (random) density. It should be remarked that, one can choose a version of $\pi_t(x)$ such that the map $(\omega,x,t)\to \pi_t(x)(\omega)$ on the product of the probability space and $\N\times\R_+$ is measurable by taking $\pi_t(x)$ as a limit of measurable maps,
\[
\pi_t(x)=\limsup_{s\to 0}\frac{1}{s} \nu(\{x\}\times[t,t+s]).
\]
See Chapter~8 of Rudin~\cite{Rudin:01} for example.
\begin{prop}\label{proppsi}
For the convergence in distribution of continuous processes
\[
\lim_{k\to+\infty} \left(\frac{1}{N_k}\int_0^{N_kt}X_1^{N_k}(u)\,\diff u\right)
=(\Psi(t))\stackrel{\text{def.}}{=}\left(\mu \int_0^t \croc{\pi_u,I}\,\diff u\right),
\]
where $I(x)=x$ for $x\in\N$. Moreover, almost surely, for all $t\geq 0$,
\[
\int_0^t\pi_u(\N)\,\diff u =t.
\]
\end{prop}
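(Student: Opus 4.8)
The plan is to read the prelimit functional as an integral against the occupation measure $\mu_N$ and then transport the almost sure weak convergence $\mu_{N_k}\to\nu$. The change of variable $u=N_ks$ gives the exact identity
\[
\frac{1}{N_k}\int_0^{N_kt}X_1^{N_k}(u)\,\diff u=\int_0^tX_1^{N_k}(N_ks)\,\diff s=\croc{\mu_{N_k},I\otimes\ind{s\le t}},
\]
where $I(x)=x$, so that formally the limit should be $\croc{\nu,I\otimes\ind{s\le t}}=\int_0^t\croc{\pi_u,I}\,\diff u$. Read on the linear time scale, the martingale decomposition~\eqref{SDE1} moreover gives $X_0^N(Nt)/N=\mu N^{-1}\int_0^{Nt}X_1^N(u)\,\diff u+M_0^N(Nt)/N+o(1)$, which identifies the limiting fraction of lost files with $\Psi(t)=\mu\int_0^t\croc{\pi_u,I}\,\diff u$. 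The difficulty is that the test function is inadmissible for weak convergence of Radon measures on $\N\times\R_+$: the factor $I$ is unbounded and not compactly supported in $x$, and $\ind{s\le t}$ is discontinuous at $s=t$.

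First I would dispose of the unboundedness by a hard truncation $I_M(x)=x\ind{x\le M}$, which is compactly supported in $x$; approximating $\ind{s\le t}$ from above by a continuous, compactly supported $\phi_\eta$ equal to $1$ on $[0,t]$ and vanishing past $t+\eta$ makes $I_M\otimes\phi_\eta$ a genuine element of $C_c(\N\times\R_+)$, to which the weak convergence applies. The truncation error is
\[
\frac{1}{N_k}\int_0^{N_kt}X_1^{N_k}(u)\ind{X_1^{N_k}(u)>M}\,\diff u,
\]
and this is exactly where Lemma~\ref{domlem} enters: the coupling $X_1^N(u)\le L_{\beta_0}(Nu)$ with an ergodic $M/M/1$ queue, whose stationary law has geometric tails, lets me bound this integrand by $L_{\beta_0}(Nu)\ind{L_{\beta_0}(Nu)>M}$ and then apply the ergodic theorem, giving that $\limsup_N$ of this error is at most $t\,\E[L_{\beta_0}(\infty)\ind{L_{\beta_0}(\infty)>M}]$, which tends to $0$ as $M\to\infty$ because $\E[L_{\beta_0}(\infty)]<\infty$. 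This uniform integrability justifies interchanging $\lim_k$ and $\lim_M$, while the boundary error $\croc{\nu,I_M\otimes(\phi_\eta-\ind{s\le t})}$ vanishes as $\eta\to0$ since $\nu$ is absolutely continuous in $t$. The limit is thus $\int_0^t\croc{\pi_u,I}\,\diff u$ for every $t$ at which $u\mapsto\nu(\N\times[0,u])$ is continuous, that is, for almost every $t$.

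To promote this to convergence of continuous processes and to every $t\ge0$, I would invoke the second part of Lemma~\ref{domlem}, which through the same domination bounds the modulus of continuity of $t\mapsto N^{-1}\int_0^{Nt}X_1^N(u)\,\diff u$ uniformly in $N$ with high probability, yielding tightness in $C(\R_+,\R_+)$. Together with the almost sure convergence at continuity points already obtained, and with the continuity and monotonicity of the candidate limit $t\mapsto\int_0^t\croc{\pi_u,I}\,\diff u$, equicontinuity upgrades pointwise convergence to uniform-on-compacts convergence, hence to convergence of the continuous processes for all $t$.

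Finally, the identity $\int_0^t\pi_u(\N)\,\diff u=t$ follows from the same truncation mechanism applied to the constant weight. Since $\sum_x\ind{X_1^N(u)=x}=1$, one has the exact prelimit identity $\croc{\mu_N,1\otimes\ind{s\le t}}=t$ for every $N$ and $t$. Truncating $1$ to $\ind{x\le M}$ produces the error $N^{-1}\int_0^{Nt}\ind{X_1^N(u)>M}\,\diff u$, again uniformly small in $N$ for large $M$ by the domination of Lemma~\ref{domlem}, so no mass escapes to $x=+\infty$ in the limit and $\croc{\nu,1\otimes\ind{s\le t}}=\int_0^t\pi_u(\N)\,\diff u=t$. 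I expect the crux of the whole argument to be precisely this control of mass at infinity, namely the uniform integrability of the family $(X_1^N(N\cdot))$ under the occupation measures and the impossibility of escape to infinity; every instance of it rests on the geometric tails of the dominating ergodic $M/M/1$ queue supplied by Lemma~\ref{domlem}.
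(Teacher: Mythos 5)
Your proof is correct and follows essentially the same route as the paper's: the occupation-measure representation of $N^{-1}\int_0^{Nt}X_1^N(u)\diff u$, truncation of the unbounded test function $I$, control of the tail term via the coupling with the ergodic $M/M/1$ queue of Lemma~\ref{domlem} combined with the ergodic theorem (giving the bound $t\,\E(L_{\beta_0}(\infty)\ind{L_{\beta_0}(\infty)\geq K})$), tightness from the modulus-of-continuity estimate of the same lemma, and the same no-escape-of-mass argument for $\int_0^t\pi_u(\N)\diff u=t$. The only difference is cosmetic: you make explicit the smoothing of the time indicator $\ind{s\leq t}$ into an admissible test function, a step the paper leaves implicit.
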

It must be noted that the last relation is crucial, it shows that the masses of the measures $\mu_{N_k}$, for $k\geq 1$, do not vanish at infinity. This property is sometimes absent of the proofs of stochastic averaging principles, it is nevertheless mandatory to identify $\pi_u$ as an invariant distribution of a Markov process. 
\begin{proof}
 The criteria  of the modulus  of   continuity  is   used  to  prove   the  tightness  of   
\[
(\Psi_{N}(t))\stackrel{\text{def.}}{=} \left(\frac{\mu}{N}\int_0^{Nt}X_1^{N}(u)\,\diff u\right).
\]
By Lemma~\ref{domlem}
\[
\Psi_{N}(t)-\Psi_N(s)=\frac{\mu}{N}\int_{Ns}^{Nt}X_1^N(u)\,\diff u\leq \frac{\mu}{N^2}\int_{N^2s}^{N^2t}L_{\beta_0}(u)\,\diff u.
\]
As in the proof of Proposition~\ref{NormalProp}, one concludes that the  sequence of processes $(\Psi_N(t))$ is tight. 

For $K>0$ and $t\geq 0$, the almost sure convergence of the measures $(\mu_{N_k})$ gives the convergence 
\[
\lim_{k\to+\infty}\frac{1}{N_k}\int_0^{N_kt}X_1^{N_k}(u)\mathbbm{1}_{[0,K]}(X_1^{N_k}(u))\,\diff u= 
\int_0^t \croc{\pi_u,I\mathbbm{1}_{[0,K]}}\,\diff u,
\]
where $I(x)=x$. By using again Lemma~\ref{domlem}, one gets that
\[
\frac{1}{N_k}\int_0^{N_kt}X_1^{N_k}(u)\ind{X_1^{N_k}(u)\geq K}\,\diff u\leq 
\frac{1}{N_k^2}\int_0^{N_k^2t}L_{\beta_0}(u)\ind{L_{\beta_0}(u)\geq K}\,\diff u,
\]
and the ergodic theorem applied to $(L_{\beta_0}(t))$ shows that the last quantity is converging in distribution to 
\[
\left(t\,\E\left(L_{\beta_0}(\infty)\ind{L_{\beta_0}(\infty)\geq K}\right)\right)
\]
where $L_{\beta_0}(\infty)$ is the limit in distribution of $(L_{\beta_0}(t))$, a geometrically distributed random 
variable. For $\eps>0$, $K$ is chosen sufficiently large so that the last quantity is less than $\eps/2$, consequently if $k$ is large enough, one has
\[
\frac{1}{N_k}\int_0^{N_kt}X_1^{N_k}(u)\ind{X_1^{N_k}\geq K}\,\diff u\leq \eps.
\]
One deduces that $(\Psi(t))$ is the only possible limiting process for $(\Psi_{N_k}(t))$. This proves the first half of the proposition. 

For $K\geq 1$,  the convergence of $(\mu_{N_k})$ gives the relation
\[
\lim_{k\to+\infty}\frac{1}{N_k}\int_0^{N_kt}\ind{X_{N_k}(u)\leq K}\,\diff u=
\nu([0,K]\times [0,t])=
\int_0^{t}\pi_u([0,K])\,\diff u.
\]
By using again the stochastic domination by an ergodic $M/M/1$ queue, 
\[
\frac{1}{N_k}\int_0^{N_kt}\ind{L_{\beta_0}(u)\leq K}\,\diff u\leq \frac{1}{N_k}\int_0^{N_kt}\ind{X_{N_k}(u)\leq K}\,\diff u.
\]
by letting $k$ go to infinity one gets that, almost surely,
\[
t\P(L_{\beta_0}(\infty)\leq K)\leq \int_0^t\pi_u([0, K])\,\diff u\leq \int_0^t\pi_u(\N)\,\diff u,
\]
now if $K$ go to infinity, one obtains the relation
\[
\int_0^t\pi_s(\N)\,\diff s=t
\]
holds for all $t\in\N$ and consequently for all $t\geq 0$. The proposition is proved.
\end{proof}
\begin{theorem}[Rate of Decay of the Network]\label{theodec}
If $\rho=\lambda/\mu>2\beta$, then, as $N$ goes to infinity,  the process $(X_0^N(Nt)/N)$
converges to $(\Psi(t))$ where $\Psi(t)$ is the unique solution 
$y\in[0,\beta]$ of the equation 
\begin{equation}\label{decay}
\left(1-{y}/{\beta}\right)^{\rho/2} e^{y+\mu t}=1. 
\end{equation}
For $t\geq 0$,  the process $(X_1^N(Nt+u), u>0)$ converges in distribution to the stationary
process of the number of jobs of an $M/M/1$ queue with service rate $\lambda$ and arrival
rate $2\mu(\beta-\Psi(t))$.  
\end{theorem}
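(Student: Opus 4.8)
The plan is to combine the stochastic averaging machinery of Proposition~\ref{proppsi} with an identification of the limiting density $\pi_t$ as the invariant distribution of the fast $M/M/1$ process. First I would note that, by Relation~\eqref{SDE1}, one has $X_0^N(Nt)/N = (X_0^N(0) + \mu\int_0^{Nt} X_1^N(u)\,\diff u + M_0^N(Nt))/N$. The martingale term is negligible: its increasing process is $\mu\int_0^{Nt}X_1^N(u)\,\diff u$, which is of order $N$ (by Lemma~\ref{domlem} and Proposition~\ref{proppsi}), so $M_0^N(Nt)/N \to 0$ in $L^2$ by Doob's inequality. Hence $X_0^N(Nt)/N$ has the same limit as $(1/N)\int_0^{Nt}\mu X_1^N(u)\,\diff u$, which is precisely $(\Psi_{N}(t))$ and converges to $(\Psi(t))$ by Proposition~\ref{proppsi}. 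So the task reduces to identifying $\Psi$ and showing it solves Equation~\eqref{decay}.

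The heart of the argument is identifying the random measure $\pi_t$. The slow variable here is the fraction of lost files, $X_0^N(Nt)/N \approx \Psi(t)$, which on the fast time scale is essentially frozen. Given the slow state $\Psi(t)$, the fast process $X_1^N$ sees arrivals at rate $2\mu(F_N/N - X_0^N/N - X_1^N/N)N \approx 2\mu(\beta - \Psi(t))N$ and departures at rate $\lambda N$, so it behaves like an $M/M/1$ queue with arrival rate $2\mu(\beta-\Psi(t))$ and service rate $\lambda$. I would make this rigorous via the martingale representation~\eqref{SDE2}: for $f\in C_c(\N)$, divide by $N$ and integrate against the occupation measure. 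The generator term becomes $\int_0^t \croc{\pi_u, \Omega[\beta-\Psi(u)](f)}\,\diff u$ up to vanishing corrections, while the left side and the $O(1/N)$ drift and martingale terms vanish after division by $N$. This forces, for almost every $u$, the stationarity relation $\croc{\pi_u, \Omega[\beta-\Psi(u)](f)} = 0$ for all $f\in C_c(\N)$. Combined with the mass condition $\pi_u(\N)=1$ from Proposition~\ref{proppsi}, this identifies $\pi_u$ as the unique invariant (geometric) distribution of the $M/M/1$ queue with parameters $2\mu(\beta-\Psi(u))$ and $\lambda$, whose mean is $\croc{\pi_u,I} = \rho_u/(1-\rho_u)$ with $\rho_u = 2\mu(\beta-\Psi(u))/\lambda = 2(\beta-\Psi(u))/\rho$.

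Once $\pi_u$ is identified, Proposition~\ref{proppsi} gives the closed ODE $\Psi'(t) = \mu\croc{\pi_t,I} = \mu\,\rho_t/(1-\rho_t)$, i.e. $\Psi'(t) = 2\mu(\beta-\Psi(t))/(\rho - 2(\beta-\Psi(t)))$ with $\Psi(0)=0$. I would then verify by a direct separation-of-variables computation that the implicit solution is exactly Equation~\eqref{decay}: differentiating $(1-y/\beta)^{\rho/2}e^{y+\mu t}=1$ recovers this ODE, and monotonicity in $y\in[0,\beta]$ of the left side gives existence and uniqueness of the solution. The convergence of $(X_1^N(Nt+u), u>0)$ to the stationary $M/M/1$ process follows from the same identification of the local equilibrium together with the normal-time-scale analysis of Theorem~\ref{NormalProp} applied after the time shift $Nt$.

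The main obstacle I anticipate is the rigorous passage from the martingale identity to the pointwise stationarity equation $\croc{\pi_u,\Omega[\beta-\Psi(u)](f)}=0$. One must control the unboundedness of the arrival-rate factor $F_N/N - (X_0^N+X_1^N)/N$ and justify replacing $X_0^N(Nu)/N$ by its limit $\Psi(u)$ inside the occupation-measure integral, uniformly in $u$; this requires the tightness and coupling bounds of Lemma~\ref{domlem} to dominate tail contributions from large values of $X_1^N$, exactly as the mass-conservation property $\int_0^t\pi_u(\N)\,\diff u=t$ was used in Proposition~\ref{proppsi} to rule out escape of mass to infinity. Establishing that $\Psi$ is continuous (hence the identification holds for every $u$, not just almost every $u$) and that $\beta-\Psi(u)$ stays in the stable regime $2(\beta-\Psi(u))<\rho$ throughout closes the argument.
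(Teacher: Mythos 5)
Your proposal follows essentially the same route as the paper: reduce $X_0^N(Nt)/N$ to the occupation-measure functional $\Psi_N$ of Proposition~\ref{proppsi}, use the martingale identity~\eqref{SDE2} on the linear time scale to force $\croc{\pi_u,\Omega[\beta-\Psi(u)](f)}=0$ for a.e.\ $u$, combine with the mass condition $\pi_u(\N)=1$ to identify $\pi_u$ as the geometric invariant law of the fast $M/M/1$ queue, and solve the resulting autonomous equation for $\Psi$ by separation of variables to obtain~\eqref{decay}. The details you flag as obstacles (domination via Lemma~\ref{domlem}, non-vanishing of mass, the a.e.\ identification handled through a Lebesgue-null set) are exactly the points the paper's proof addresses, so the plan is sound as written.
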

\noindent
It is easily seen that  the asymptotic expansion
$\Psi(t)\sim \beta-\beta\exp(-2(\beta+\mu t)/\rho)$ holds as $t$ goes to infinity. 
The last part of the theorem states that, ``around'' time $Nt$, the process $X_1^N$ has a local equilibrium. 
\begin{proof}
Equation~\eqref{SDE2} gives that, for $f\in C_c(\N)$ 
\begin{multline}\label{eq1}
f(X^N_1(Nt))-f(X^N_1(0))-M_{f,1}^N(Nt)=
N^2\int_0^t \Omega[Y_N(u)](f)(X(Nu))\,\diff u\\
+\mu N\int_0^t\Delta^-(f)(X_1^N(Nu))X_1^N(Nu)\,\diff u
\end{multline}
with, from Equation~\eqref{SDE1},
\[
Y_N(u)=\frac{F_N}{N}-\frac{X^N_1(Nu)}{N}-\frac{M_0^N(Nu)}{N}-\frac{\mu}{N}\int_0^{Nu} X^N_1(v)\,\diff u,
\]
and $\Delta^-(f)(x)=(f(x-1)-f(x))\ind{x\geq 1}$. 
The bound on the increasing process of the martingale $(M_{f,1}^N(t))$  at the end of Section~\ref{modelsec}, Doob's Inequality  and Lemma~\ref{domlem} show that   the  sequence  of   processes  
\begin{multline*}
\left(\frac{1}{N^2}\left[\rule{0mm}{5mm}f(X^{N}_1(Nt)){-}f(X^{N}_1(0)){-}M_{f,1}^N(Nt)\right.\right. \\
\left.\left.{-}\mu N\int_0^t(\Delta^-(f)(X_1^N(Nu))X_1^N(Nu)\,\diff u\right]\right)
\end{multline*}
converges to $0$   for  the  topology  of  the uniform  norm  on  compact  sets. 

By using Lemma~\ref{domlem}, one gets that
\[
\frac{X^N_1(Nu)}{N}\leq \frac{L_{\beta_0}(N^2u)}{N},
\]
hence the sequence of processes $({X^N_1(N^2u)}/{N})$  converges  in distribution  to $0$. 

The bound on the increasing process and  Proposition~\ref{proppsi} show  that the sequence  of processes $(Y_{N_k}(t))$  converges   in  distribution  to  $(\beta-\Psi(t))$.    One  deduces  from Equation~\eqref{eq1} that the sequence of processes
\[
\left(\int_0^t \Omega[\beta-\Psi(u)](f)(X_1^{N_k}(N_ku))\,\diff u\right)
=\left(\int_0^t \Omega[\beta-\Psi(u)](f)(x)\mu_{N_k}(\diff x,\diff u)\right)
\]
converges to $0$. 

The convergence of the $(\mu_{N_k})$ and Proposition~\ref{proppsi} give therefore that,
almost surely,  the relations
\[
\int_0^t \croc{\pi_u,\Omega[\beta-\Psi(u)](f)}\,\diff u=0 \text{ and } 
\int_0^t \pi_u(\N)\,\diff u = t,
\]
hold for all $t\geq 0$ and all functions $f\in C_c(\N)$. Note that one has used the fact
that $C_c(\N)$ has a countable dense subset for the uniform norm. 

If $\Delta$ is the subset of all real numbers $u\geq 0$ such that  one of the relations 
\[
\begin{cases}
\pi_u(\N)\not=1,\\  
\croc{\pi_u,\Omega[\beta-\Psi(u)](f)}\not=0, \text{ for some } f\in C_c(\N),
\end{cases}
\]
holds, then the Lebesgue measure of $\Delta$ is $0$. Hence if $u\not\in \Delta$, then
$\pi_u(\N)=1$ and $\croc{\pi_u,\Omega[\beta-\Psi(u)](f)}=0$ for all $f\in C_c(\N)$. Since 
$\Omega[\beta-\Psi(u)]$ is the infinitesimal generator of an $M/M/1$ queue with arrival
rate $2\mu(\beta-\Psi(u))$ and service rate $\lambda$, one gets that $\pi_u$ is a
geometric distribution on $\N$ with parameter $2\mu(\beta-\Psi(u))/\lambda$. 

From Proposition~\ref{proppsi} one gets that, for $t\geq 0$, 
\[
\Psi(t)=\mu\int_{[0,t]\setminus\Delta} \croc{\pi_u,I}\,\diff u=\mu\int_0^t \frac{2\mu(\beta-\Psi(u))}{\lambda-2\mu(\beta-\Psi(u))}\,\diff u,
\]
straightforward calculus gives the relation
\[
(\beta-\psi(u))^{\rho/2} e^{\psi(u)}=\beta^{\rho/2} e^{-\mu u}.
\]
It is easily checked that since $2\beta<\rho$, there is a unique $\Psi(u)<\beta$ satisfying
the above equation. The theorem is proved. 
\end{proof}
The theorem gives directly the following corollary on the asymptotic behavior of
$T_N(\delta)$, the first time when a fraction $\delta$ of the files has been lost.  
\begin{corollary}
If $\rho=\lambda/\mu>2\beta$ then, for $N\geq 1$ and $\delta\in(0,1)$, 
\[
T_N(\delta)=\inf\{t\geq 0: X_0^N(t)\geq \delta F_N\},
\]
then, for the convergence in distribution, 
\[
\lim_{N\to+\infty} \frac{T_N(\delta)}{N}= \frac{1}{\mu}\left(-\frac{\rho}{2}\log(1-\delta)-\delta\beta\right).
\]
\end{corollary}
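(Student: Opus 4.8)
The plan is to view $T_N(\delta)/N$ as the first passage time of the rescaled loss process to a level converging to $\delta\beta$, and then to transfer the convergence established in Theorem~\ref{theodec} through the first-passage-time functional. Writing $T_N(\delta)=\inf\{t\geq 0: X_0^N(t)\geq \delta F_N\}$ and performing the time change $t=Ns$, one has
\[
\frac{T_N(\delta)}{N}=\inf\left\{s\geq 0:\frac{X_0^N(Ns)}{N}\geq \delta\,\frac{F_N}{N}\right\}.
\]
Theorem~\ref{theodec} gives that $(X_0^N(Ns)/N)$ converges in distribution, for the topology of uniform convergence on compact sets, to the deterministic continuous process $(\Psi(s))$, while the threshold $\delta F_N/N$ converges to $\delta\beta$ by Relation~\eqref{beta}.

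First I would record the qualitative properties of $\Psi$ needed to control the functional. From the integral representation obtained in the proof of Theorem~\ref{theodec},
\[
\Psi(t)=\mu\int_0^t \frac{2\mu(\beta-\Psi(u))}{\lambda-2\mu(\beta-\Psi(u))}\,\diff u,
\]
one sees that $\Psi$ is differentiable with $\Psi'(t)=\mu\,\frac{2\mu(\beta-\Psi(t))}{\lambda-2\mu(\beta-\Psi(t))}$, which is strictly positive as long as $\Psi(t)<\beta$ since $\lambda>2\mu\beta$. Hence $\Psi$ is continuous and strictly increasing, with $\Psi(0)=0$ and $\Psi(t)\uparrow\beta$ as $t\to+\infty$, consistently with the asymptotic expansion stated after the theorem. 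In particular, for $\delta\in(0,1)$ the value $\delta\beta$ lies in the interior of the range of $\Psi$, so there is a unique $t^\star$ with $\Psi(t^\star)=\delta\beta$, and $\Psi$ crosses this level transversally.

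Next I would invoke continuity of the first-passage map at $\Psi$. Since $\Psi$ is strictly increasing near $t^\star$, for every $\eps>0$ one has $\Psi(t^\star-\eps)<\delta\beta<\Psi(t^\star+\eps)$. Working on a probability space where the convergence of Theorem~\ref{theodec} holds almost surely, as one may by the Skorokhod representation theorem since the limit is deterministic, the uniform convergence of $X_0^N(N\cdot)/N$ to $\Psi$ on $[0,t^\star+\eps]$ together with $\delta F_N/N\to\delta\beta$ forces, for all large $N$, the rescaled loss process to stay below the threshold on $[0,t^\star-\eps]$ and to exceed it somewhere on $[t^\star-\eps,t^\star+\eps]$. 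Therefore $T_N(\delta)/N\in[t^\star-\eps,t^\star+\eps]$ with probability tending to one, which yields the convergence in distribution of $T_N(\delta)/N$ to the constant $t^\star$.

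Finally I would identify $t^\star$ explicitly by substituting $y=\delta\beta$ into the fixed point equation~\eqref{decay}, which gives $(1-\delta)^{\rho/2}e^{\delta\beta+\mu t^\star}=1$, whence $\mu t^\star=-\tfrac{\rho}{2}\log(1-\delta)-\delta\beta$, exactly the announced limit. The only genuinely delicate point is the continuity of the first-passage functional, which is otherwise discontinuous when the limit trajectory is tangent to the level; here it is precisely the strict monotonicity of $\Psi$ coming from the stability condition $\lambda>2\mu\beta$ that guarantees a transversal crossing and rules out this pathology.
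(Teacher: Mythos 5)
Your proof is correct and follows exactly the route the paper intends: the authors state the corollary as a direct consequence of Theorem~\ref{theodec} without writing out a proof, and your argument (strict monotonicity of $\Psi$ from the stability condition, continuity of the first-passage functional at a transversally crossing deterministic limit, and inversion of Equation~\eqref{decay} at $y=\delta\beta$) is precisely the standard filling-in of that ``directly''. The algebra $\mu t^\star=-\tfrac{\rho}{2}\log(1-\delta)-\delta\beta$ checks out.
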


\appendix
\appendix
\section*{Appendix. Generalized Skorokhod Problems}\label{SkoSec}
For the sake of self-containedness, this section presents quickly the more or less classical material necessary to state and prove the convergence results used in this paper.  The general theme concerns the rigorous definition of a solution of a stochastic differential equation constrained to stay in some domain and also the proof of the existence and uniqueness of such a solution. See Skorokhod~\cite{Skorokhod}, Anderson and Orey~\cite{Anderson}, Chaleyat-Maurel and El~Karoui~\cite{Elkaroui} and, in a multi-dimensional context, Harrison and Reiman~\cite{Harrison} and Taylor and Williams~\cite{Taylor} and, in a more general context, Ramanan~\cite{Ramanan}. See Appendix~D of Robert~\cite{Robert} for a brief account.  

We first recall the classical definition of Skorokhod problem in dimension~$K$.  If $(Z(t))$ is some function of the set ${\cal D}(\R_+,\R)$ of c\`adl\`ag functions defined on $\R_+$, the couple of functions $[(X(t)),(R(t))]$ is said to be a solution of the Skorokhod problem associated to $(Z(t))$ and $P$ whenever
\begin{enumerate}
\item $X(t)=Z(t)+  R(t)$, for all $t\geq 0$,
\item $X(t)\geq 0$, for all $t\geq 0$,
\item $t\to R(t)$ is non-decreasing, $R(0)=0$ and
\[
\int_{\R_+} X(t)\,\diff R(t)=0.
\]
\end{enumerate}
 The generalization used in this paper corresponds to the case when $(Z(t))$ is itself a functional of $(X(t))$.
\begin{defi}[Generalized Skorokhod Problem]\ \\
If $G: {\cal D}(\R_+,\R)\to {\cal D}(\R_+,\R)$ is a Borelian function, $((X(t)),(R(t)))$ is a solution of the generalized Skorokhod Problem (GSP) associated to $G$ if $((X(t)),(R(t)))$ is the solution of the Skorokhod Problem associated to $G(X)$ and $P$, in particular, for all $t\geq 0$, 
\[
X(t)=G(X)(t)+ R(t)\text{ and }
\int_{\R_+} X(t)\,\diff R(t)=0,
\]
\end{defi}
The classical Skorokhod problem described above corresponds to the case when the functional
$G$ is constant and equal to $(Z(t))$. If one takes
\[
G(x)(t)=\int_0^t\sigma(x(u))\,\,\diff B(u) +\int_0^t\delta(x(u))\,\diff u,
\]
where  $(B(t))$ is a standard Brownian motion and $\sigma$ and $\delta$ are Lipschitz functions on
$\R$.  The first coordinate $(X(t))$ of a possible solution to the corresponding GSP can
be described as the solution of the SDE  
\[
\diff X(t)=\sigma(X(t))\,\,\diff B(t)+\delta(X(t))\,\diff t
\]
reflected at $0$.\\
\begin{prop}\label{GSPprop}
If $G: {\cal D}(\R_+,\R)\to {\cal D}(\R_+,\R)$ is such that, for any $T>0$, there exists a constant $C_T$
such that, for all  $(x(t))\in {\cal D}(\R_+,\R)$ and $0\leq t\leq T$,
\begin{equation}\label{Lip}
\sup_{0\leq s\leq t} \|G(x)(s)-G(y)(s)\|\leq C_T \int_0^t \|x(u)-y(u)\|\,\diff u,
\end{equation}
then there exists a unique solution to the
generalized Skorokhod problem associated to the functional $G$ and the matrix $P$. 
\end{prop}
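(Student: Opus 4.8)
The plan is to establish existence and uniqueness of the generalized Skorokhod problem (GSP) by reducing it to the classical one-dimensional Skorokhod problem and then running a Picard-type fixed-point argument, with the Lipschitz-in-integrated-form hypothesis \eqref{Lip} supplying the contraction. First I would recall the explicit solution map of the classical Skorokhod problem in dimension one: if $(Z(t))$ is c\`adl\`ag with $Z(0)\geq 0$, then the reflecting term is given explicitly by $R(t)=\sup_{0\leq s\leq t}\max(0,-Z(s))$ and $X(t)=Z(t)+R(t)$. Denote by $\phi$ this solution map, so that $(X,R)=\phi(Z)$. The key classical fact I would invoke, already used repeatedly in the body of the paper (see Appendix~D of Robert~\cite{Robert}), is that $\phi$ is Lipschitz for the sup norm on compacts: for all $0\leq t\leq T$,
\[
\sup_{0\leq s\leq t}\|\phi(Z_1)(s)-\phi(Z_2)(s)\|\leq 2\sup_{0\leq s\leq t}\|Z_1(s)-Z_2(s)\|.
\]

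With this in hand, a solution of the GSP is exactly a fixed point of the map $\Phi$ that sends a c\`adl\`ag path $(x(t))$ to the first coordinate of $\phi(G(x))$, that is, $\Phi(x)=\phi(G(x))_1$. Combining the Lipschitz bound for $\phi$ with hypothesis~\eqref{Lip}, I would estimate, for $0\leq t\leq T$,
\[
\sup_{0\leq s\leq t}\|\Phi(x)(s)-\Phi(y)(s)\|\leq 2\sup_{0\leq s\leq t}\|G(x)(s)-G(y)(s)\|\leq 2C_T\int_0^t \sup_{0\leq r\leq u}\|x(r)-y(r)\|\,\diff u.
\]
Writing $\delta(t)=\sup_{0\leq s\leq t}\|x(s)-y(s)\|$ and iterating this inequality $n$ times gives the standard Picard bound $\sup_{0\leq s\leq T}\|\Phi^n(x)(s)-\Phi^n(y)(s)\|\leq (2C_T T)^n/n!\,\sup_{0\leq s\leq T}\|x(s)-y(s)\|$, so some power $\Phi^n$ is a strict contraction on ${\cal D}([0,T],\R)$ equipped with the sup norm. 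The Banach fixed-point theorem then yields a unique fixed point on $[0,T]$, and since $T$ is arbitrary and the solutions on nested intervals are forced to agree by the same contraction estimate, one patches them into a unique global solution on $\R_+$; existence of the reflecting term $(R(t))$ and the complementarity condition $\int_{\R_+}X(t)\,\diff R(t)=0$ come for free from the classical solution map $\phi$ applied to the fixed point.

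The main obstacle I anticipate is not the iteration itself, which is routine once the integrated Lipschitz bound is in place, but verifying that the iterates stay within the space on which $\phi$ and $G$ are defined and that the estimate can be bootstrapped over successive time intervals without losing the constant $C_T$; in particular one must confirm that the sup-norm metric yields a complete space of c\`adl\`ag paths so that Banach's theorem applies, and that the factorial gain in the Picard bound genuinely comes from integrating the nondecreasing function $\delta$ rather than merely from a naive Gronwall step. The integral form of~\eqref{Lip}, as opposed to a pointwise Lipschitz condition, is precisely what converts the fixed-point iteration into a contraction through the $(2C_T T)^n/n!$ factor, and checking that this is compatible with the Volterra structure of the functionals $F$ appearing in Theorems~\ref{fluidtheo} and~\ref{Heavy} is the step that deserves the most care.
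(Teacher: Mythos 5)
Your proposal is correct and follows essentially the same route as the paper: both combine the Lipschitz continuity of the classical one-dimensional Skorokhod solution map with the integrated Lipschitz hypothesis~\eqref{Lip} to obtain the factorial bound $(\alpha t)^N/N!$ on successive Picard iterates, yielding existence and uniqueness. The only cosmetic difference is that you invoke the Banach fixed-point theorem for a power of the map $\Phi$, whereas the paper carries out the iteration explicitly and shows the sequence of iterates is uniformly Cauchy on compacts (and handles uniqueness by the same iterated inequality applied to the difference of two solutions).
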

\begin{proof}
Define the sequence $(X_N(t))$ by induction $(X^0(t),R^0(t))=0$ and, for $N\geq 1$,  $(X^{N+1},R^{N+1})$ is the
solution of the Skorokhod problem  (SP) associated to $G(X^{N})$, in particular,
\[
X^{N+1}(t)=F\left(X^N\right)(t)+R^{N+1}(t) \text{ and } \int_{\R_+} X^{N+1}(u)\,\diff R^{N+1}(u)=0.
\]
The existence of such a solution is guaranteed as well as the Lipschitz property of the solutions of a classical Skorokhod problem, see Proposition~D.4 of
Robert~\cite{Robert}, this gives the existence of some constant $K_T$ such that, for all
$N\geq 1$ and $0\leq t\leq T$, 
\[
\left\|X^{N+1}-X^{N}\right\|_{\infty,t}\leq K_T \left\|F\left(X^{N}\right)-F\left(X^{N-1}\right)\right\|_{\infty,t},
\]
where $\|h\|_{\infty,T}= \sup\{|h(s)|:0\leq s\leq T\}$. From Relation~\eqref{Lip}, this
implies that
\[
\left\|X^{N+1}-X^{N}\right\|_{\infty,t}\leq \alpha\int_0^{t} \left\|X^{N}-X^{N-1}\right\|_{\infty,u}\,\diff u,
\]
with $\alpha=K_TC_T$. The iteration of the last relation yields the inequality
\[
\left\|X^{N+1}-X^{N}\right\|_{\infty,t}\leq \frac{(\alpha t)^N}{N!} \int_0^{t}
\left\|X^{1}\right\|_{\infty,u}\,\diff u, \quad 0\leq t\leq T.
\]
One concludes that the sequence $(X^N(t))$ is converging uniformly on compact sets and
consequently the same is true for the sequence $(R^N(t))$. Let $(X(t))$ and $(R(t))$ be the
limit of these sequences. By continuity of the SP, the couple $((X(t)), (R(t)))$ is the
solution of the SP associated to $G(X)$, and hence a solution of the GSP associated to $F$. 

Uniqueness. If $(Y(t))$ is another solution of the GSP associated to $F$. In the same way
as before, one gets by induction, for $0\leq t\leq T$,
\[
\left\|X-Y\right\|_{\infty,t}\leq \frac{(\alpha t)^N}{N!} \int_0^{t} \left\|X-Y\right\|_{\infty,u}\,\diff u,
\]
and by letting $N$ go to infinity, one concludes that $X=Y$. The proposition is proved. 
\end{proof}

\providecommand{\bysame}{\leavevmode\hbox to3em{\hrulefill}\thinspace}
\providecommand{\MR}{\relax\ifhmode\unskip\space\fi MR }
\providecommand{\MRhref}[2]{%
  \href{http://www.ams.org/mathscinet-getitem?mr=#1}{#2}
}
\providecommand{\href}[2]{#2}

\end{document}